\theoremstyle{definition}
\newtheorem{theorem}{Theorem}[section]
\newtheorem{lemma}[theorem]{Lemma}
\newtheorem{proposition}[theorem]{Proposition}
\newtheorem{definition}[theorem]{Definition}
\newtheorem{example}[theorem]{Example}
\newcommand{\Spec}{\text{Spec}}
\newcommand{\Min}{\text{Min}}
\newenvironment{manualtheorem}[1]{%
  \manualtheoreminner
}{\endmanualtheoreminner}
\newcommand{\interior}[1]{%
  {\kern0pt#1}^{\mathrm{o}}%
}
\begin{document}

\title{Gluing Minimal Prime Ideals in Local Rings}
\author{C. Colbert}
\thanks{The first author was partially supported by a Lenfest grant from Washington and Lee University.}

\author{S. Loepp}

\maketitle

\begin{abstract}
Let $B$ be a reduced local (Noetherian) ring with maximal ideal $M$.  Suppose that $B$ contains the rationals, $B/M$ is uncountable and $|B| = |B/M|$.  
Let the minimal prime ideals of $B$ be partitioned into $m \geq 1$ subcollections $C_1, \ldots ,C_m$. 
We show that there is a reduced local ring $S \subseteq B$ with maximal ideal $S \cap M$ such that the completion of $S$ with respect to its maximal ideal is isomorphic to the completion of $B$ with respect to its maximal ideal and such that, if $P$ and $Q$ are prime ideals of $B$, then $P \cap S = Q \cap S$ if and only if $P$ and $Q$ are in $C_i$ for some $i = 1,2, \ldots ,m$.  
\end{abstract}

\section{Introduction}

Given a Noetherian ring $B$, it is often useful to find another Noetherian ring $S$ such that the prime ideals of $B$ and the prime ideals of $S$ are related in some specific desired way.  For example, if $P$ is a prime ideal of $B$, then, in many situations, passing to the localization $S= B_P$ is incredibly useful, in part because there is a one-to-one (inclusion preserving) correspondence between the prime ideals of $B$ contained in $P$ and the prime ideals of $B_P$.  Similarly, it is a standard technique in many settings to study the domain $B/P$ and, of course, the relationship between the prime ideals of $B$ and the prime ideals of $B/P$ is well understood.  In this paper, we consider the following question.  Let $B$ be a local (Noetherian) ring and suppose that $B$ has $n$ minimal prime ideals.  Let $m$ be an integer such that $1 \leq m \leq n$.  Is there a local subring $S$ of $B$ such that $S$ and $B$ have the same completion, and such that, when viewed as partially ordered sets (posets), $\Spec(B)$ and $\Spec(S)$ are the same except that $\Spec(B)$ has $n$ minimal elements and $\Spec(S)$ has $m$ minimal elements?  Informally, in this setting, we think of obtaining the partially ordered set $\Spec(S)$ by taking the partially ordered set $\Spec(B)$ and ``gluing" certain minimal nodes together while preserving everything else. We show that for a large class of local rings, such a subring does, in fact, exist.

We start with a reduced local ring $B$ with maximal ideal $M$ and we suppose that $B$ contains the rationals, $B/M$ is uncountable, and $|B| = |B/M|$.  
Our goal is to construct a local ring $S$ such that $S \subseteq B$, the completion of $S$ is isomorphic to the completion of $B$, and, $\Spec(S)$ and $\Spec(B)$ when viewed as partially ordered sets, are the same except for their minimal elements.  
Specifically, the main result of this paper is the following theorem.


\begin{manualtheorem}{\ref{biggluing}}
Let $B$ be a reduced local ring with maximal ideal $M$.  Suppose that $B$ contains the rationals, $B/M$ is uncountable and $|B| = |B/M|$.  
Suppose also that the set of minimal prime ideals of $B$ is partitioned into $m \geq 1$ subcollections $C_1, \ldots ,C_m$. Then there is a reduced local ring $S \subseteq B$ with maximal ideal $S \cap M$ such that 
\medskip
\begin{enumerate}
\item $S$ contains the rationals, \\
\item The completion of $S$ at its maximal ideal is isomorphic to the completion of $B$ at its maximal ideal, \\
\item $S/(S \cap M)$ is uncountable and $|S| = |S/(S \cap M)|$, \\
\item If $Q$ and $Q'$ are minimal prime ideals of $B$ then $Q \cap S = Q' \cap S$ if and only if there is an $i \in \{1,2, \ldots ,m\}$ with $Q \in C_i$ and $Q' \in C_i$, \\
\item The map $f:\Spec(B) \longrightarrow \Spec(S)$ given by $f(P) = S \cap P$ is onto and, if $P$ is a prime ideal of $B$ with positive height, then $f(P)B = P$.  In particular, if $P$ and $P'$ are prime ideals of $B$ with positive height, then $f(P)$ has positive height and $f(P) = f(P')$ implies that $P = P'$. \\
\end{enumerate}
\end{manualtheorem}

The properties of $f$ in Theorem \ref{biggluing} guarantee that it is an order-preserving onto map and, when $f$ is restricted to the prime ideals of $B$ with positive height, it is a poset isomorphism from the prime ideals of $B$ with positive height to the prime ideals of $S$ with positive height.  
In addition, $f$ maps all of the elements of a given $C_i$ to the same prime ideal of $S$.  Hence, one could think of $f$ as gluing all the prime ideals in each respective $C_i$ together while totally preserving everything else about the spectrum.
 Moreover, there is no restriction on how the minimal prime ideals of $B$ are glued; that is, one can choose the sets $C_1, \ldots ,C_m$ to be {\em any} partition of the set of minimal prime ideals of $B$.  We refer to Theorem \ref{biggluing} as The Gluing Theorem.
 
This type of gluing is done in \cite{SMALL2009} where the ring $B$ contains the rationals and is required to be complete.  We show that it is possible to do this type of gluing replacing the condition that $B$ is complete with the conditions that $B$ is reduced, $B/M$ is uncountable and $|B| = |B/M|$. In particular, whereas the gluing in \cite{SMALL2009} is done inside of a complete local ring, the gluing in this paper can be done inside a suitable localized polynomial ring which is not complete.  
To illustrate, we give two examples for which our main result applies, but Theorem 3.12 in \cite{SMALL2009} does not.


\begin{example}
Let $B = \mathbb{C}[x,y,z,w]_{(x,y,z,w)}/((x)\cap (y,z))$.  Note that $B$ satisfies the conditions of Theorem \ref{biggluing}, and it has two minimal prime ideals.  Using Theorem \ref{biggluing} with $m = 1$, we obtain a local ring $S$ contained in $B$ such that the completion of $S$ is $\mathbb{C}[[x,y,z,w]]/((x)\cap (y,z))$ and such that $S$ has the same prime ideal structure as $B$ except that $S$ has only one minimal prime ideal.  In particular, $S$ is a local domain that is not catenary (and hence, not excellent) and, since $S$ is a subring of $B$, all ideals of $S$ are generated by polynomials.
\end{example}

\begin{example}
Let $B = \mathbb{C}[x_1,x_2,x_3,x_4,x_5,x_6]_{(x_1,x_2,x_3,x_4,x_5,x_6)}/(x_1x_2x_3x_4x_5x_6)$.  Then $B$ satisfies the conditions of Theorem \ref{biggluing}, and it has six minimal prime ideals. Let $C_1 = \{(x_1),(x_2),(x_3)\}$, $C_2 = \{(x_4),(x_5)\}$, and $C_3 =\{(x_6)\}$.  Then there exists a local ring $S$ contained in $B$ such that the completion of $S$ is  $\mathbb{C}[[x_1,x_2,x_3,x_4,x_5,x_6]]/(x_1x_2x_3x_4x_5x_6)$ and such that $S$ has exactly three minimal prime ideals.  Moreover, the minimal prime ideals of $S$ are $(x_1) \cap S = (x_2) \cap S = (x_3) \cap S$, $(x_4) \cap S = (x_5) \cap S$, and $(x_6) \cap S$, and, if $P,Q \in \Spec(B)$ are not minimal prime ideals of $B$, then $S \cap P = S \cap Q$ if and only if $P = Q$.
\end{example}

Since the ring $S$ in Theorem \ref{biggluing} is a reduced local ring that contains the rationals, $S/(S \cap M)$ is uncountable and $|S| = |S/(S \cap M)|$, we can apply the theorem multiple times to obtain a descending chain of rings where the number of minimal prime ideals of the rings in the chain decreases.  We illustrate with an example.

\begin{example}
Let  $B_1 = \mathbb{C}[[x,y,z]]/(xyz)$.  By Theorem \ref{biggluing}, there is a reduced local ring $B_2$ contained in $B_1$ such that the maximal ideal of $B_2$ is $B_2 \cap (x,y,z)$, $B_2$ contains the rationals, $B_2/(B_2 \cap (x,y,z))$ is uncountable, $|B_2| = |B_2/(B_2 \cap (x,y,z))|$, the completion of $B_2$ is $B_1$, $B_2$ has exactly two minimal prime ideals $B_2 \cap (x) = B_2 \cap (y)$ and $B_2 \cap (z)$, and, if $P,Q \in \Spec(B_1)$ are not minimal prime ideals of $B_1$, then $B_2 \cap P = B_2 \cap Q$ if and only if $P = Q$.  We now apply Theorem \ref{biggluing} to $B_2$ to obtain a local ring $B_3$ contained in $B_2$ such that the completion of $B_3$ is $B_1$ and such that $B_3$ has only one minimal prime ideal, namely $B_3 \cap (x) = B_3 \cap (y) = B_3 \cap (z)$.  Moreover, if $P,Q \in \Spec(B_1)$ are not minimal prime ideals of $B_1$, then $B_3 \cap P = B_3 \cap Q$ if and only if $P = Q$. 
\end{example}


All rings in this article are commutative with unity.  If $R$ is a ring with exactly one maximal ideal and $R$ is not necessarily Noetherian, we say that $R$ is quasi-local.  If $R$ is both quasi-local and Noetherian, we say that $R$ is local.  We use $(R,M)$ to denote a local ring with maximal ideal $M$ and, if $(R,M)$ is a local ring, we use $\widehat{R}$ to denote the $M$-adic completion of $R$.  Finally, we use $\Min(B)$ to denote the set of minimal prime ideals of $B$.


\section{The Gluing Theorem}\label{GluingSection}

We are now ready to begin the proof of our main result, The Gluing Theorem.  Much of the work in our proof is inspired by techniques from \cite{SMALL2009}.  Throughout, $(B,M)$ will be a reduced local ring with $B/M$ uncountable.
To prove The Gluing Theorem, we start by gluing two minimal prime ideals of $B$ together, and then we induct to get the final result.  We begin our construction with the following useful definition.

\begin{definition}
Let $(B,M)$ be a reduced local ring with $B/M$ uncountable, and let $\Min(B) = \{Q_1, Q_2, \ldots ,Q_n\}$ with $n \geq 2$.  A quasi-local subring $(R, R \cap M)$ of $B$ is called a Minimal-Gluing subring of $B$, or an MG-subring of $B$, if $R$ is infinite, $|R| < |B/M|$, and $R \cap Q_1 = R \cap Q_2$.
\end{definition}

Note that, if $B$ in the above definition contains the rationals, then $\mathbb{Q}$ is an MG-subring of $B$.
To construct our final ring $S$ in The Gluing Theorem, we begin with $\mathbb{Q}$ and successively adjoin uncountably many elements while ensuring that our resulting rings remain MG-subrings of $B$.  To do this, we make use of the following result which can be thought of as a generalization of the prime avoidance lemma.

\begin{lemma}[\cite{heitmannUFD}, Lemma 3] \label{primeavoid}
Let $(B,M)$ be a local ring.  Let $C \subseteq \Spec(B)$, let $I$ be an ideal of $B$ such that $I \not\subseteq P$ for every $P \in C$, and let $D$ be a subset of $B$.  Suppose $|C \times D| < |B/M|$.  Then $I \not\subseteq \bigcup \{P + r \, | \, P \in C, r \in D\}.$
\end{lemma}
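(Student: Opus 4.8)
The plan is to deduce the statement from the special case $D=\{0\}$ and to prove that case by induction on $\dim B$, the inductive step being powered by Krull's principal ideal theorem. Throughout write $\kappa=|B/M|$ and fix a set $U\subseteq B$ of coset representatives for $B/M$, so that $|U|=\kappa$ and $u-u'$ is a unit for all distinct $u,u'\in U$. Two elementary observations will be used repeatedly. First, if $y\in I$ avoids every prime in some family of fewer than $\kappa$ primes, then for a suitable $u\in U$ the element $uy\in I$ avoids every coset in any prescribed family of fewer than $\kappa$ cosets $P+r$: in the domain $B/P$ the class $\bar{y}$ is nonzero, so multiplication by it is injective and the coset $P+r$ rules out at most one value of $u$; since there are fewer than $\kappa=|U|$ cosets, some $u$ survives. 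Second (a ``combining lemma''), given $y_1,y_2\in I$ and families of primes $C_1,C_2$ with $y_i\notin P$ for $P\in C_i$ and $|C_1\cup C_2|<\kappa$, some $y_1+uy_2$ (with $u\in U$) lies in $I$ and outside every prime of $C_1\cup C_2$: a prime $P$ containing $y_2$ must lie in $C_1$, whence $y_1+uy_2\equiv y_1\not\equiv 0\pmod P$ and $P$ rules out no $u$; a prime not containing $y_2$ rules out at most one $u$, again by injectivity of multiplication by $\bar{y_2}$ in $B/P$. Iterating the combining lemma merges any finite list of such elements into one.

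Granting these, the first observation reduces everything to the following claim: if $C\subseteq\Spec B$ with $|C|<\kappa$ and $I\not\subseteq P$ for every $P\in C$, then there is $y\in I$ with $y\notin P$ for all $P\in C$ (equivalently, $I\not\subseteq\bigcup_{P\in C}P$). Indeed, since $|C\times D|<\kappa$ forces $|C|<\kappa$, such a $y$ feeds into the first observation with the family of cosets $\{P+r:(P,r)\in C\times D\}$. I am assuming $\kappa$ is infinite; when $B/M$ is finite the hypothesis $|C\times D|<|B/M|$ makes $C\times D$ small and the claim is ordinary prime avoidance.

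I would prove the claim first for $B$ a domain, by induction on $\dim B$. When $\dim B=0$, $B$ is a field, the only prime is $(0)$, and $(0)$ does not contain the nonzero ideal $I$. For $\dim B\ge 1$ — and we may assume $C\neq\emptyset$, hence $I\neq 0$, and that $(0)\notin C$ — choose $0\neq a\in I$ and let $\mathfrak Q_1,\dots,\mathfrak Q_m$ be the finitely many minimal primes over $(a)$; by the principal ideal theorem, together with $B$ being a domain, these have height exactly $1$, so $\dim(B/\mathfrak Q_i)<\dim B$. Every prime of $C$ not containing $a$ is already avoided by $a$. Every prime of $C$ containing $a$ contains some $\mathfrak Q_i$; for each $i$ that actually occurs, $I\not\subseteq\mathfrak Q_i$ (otherwise such a prime would contain $I$), so the inductive hypothesis applied inside the lower-dimensional domain $B/\mathfrak Q_i$ to the image of $I$ yields an element $x_i\in I$ avoiding every prime of $C$ that contains $\mathfrak Q_i$. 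Then $a$ and the finitely many $x_i$ together avoid all of $C$, and the combining lemma fuses them into a single such element of $I$. For an arbitrary local ring $B$ one reduces to this case: the minimal primes $\mathfrak p_1,\dots,\mathfrak p_s$ of $B$ are finite in number, each $P\in C$ contains one of them, and (arguing as above) one it contains satisfies $I\not\subseteq\mathfrak p_j$; grouping $C$ by such a choice of $j$, the domain case applied in each $B/\mathfrak p_j$ yields finitely many elements of $I$ that jointly avoid $C$, which the combining lemma again fuses.

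The step I expect to be the crux — and the reason this is not a one-line cardinality count — is the case where $C$ is infinite. A direct count fails because $I\cap P$, although a proper subgroup of the additive group of $I$, may have the same cardinality as $I$; it is precisely to get around this that one drops down through the height-one primes $\mathfrak Q_i$ and invokes the principal ideal theorem, keeping the auxiliary elements finite in number at each stage so that the combining lemma applies.
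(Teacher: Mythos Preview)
The paper does not prove this lemma at all: it is quoted verbatim from Heitmann's paper \cite{heitmannUFD} and used as a black box, so there is no ``paper's own proof'' to compare against. That said, your argument is worth assessing on its own merits.

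Your proof is correct. The reduction to $D=\{0\}$ via the coset-representative set $U$ is clean and uses exactly the right fact (that distinct elements of $U$ differ by a unit, hence remain distinct modulo every prime). The combining lemma is stated and verified carefully, and the induction on $\dim B$ in the domain case is sound: the minimal primes $\mathfrak Q_i$ over a nonzero $a\in I$ have height exactly one by Krull's theorem, so $\dim(B/\mathfrak Q_i)<\dim B$; the lift of an element of $(I+\mathfrak Q_i)/\mathfrak Q_i$ back to $I$ goes through because any preimage can be corrected by an element of $\mathfrak Q_i\subseteq P$. The final reduction from a general local ring to the domain case via the finitely many minimal primes is also fine. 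Your closing remark correctly identifies why a naive cardinality bound on $I\cap P$ does not suffice.

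The approach is somewhat heavier than necessary---invoking the principal ideal theorem and an induction on Krull dimension to prove what is at heart a counting statement---and Heitmann's original argument is shorter. But nothing in your proof is wrong, and the structure (reduce to avoiding primes, then handle primes by descending to lower-dimensional quotients where only finitely many auxiliary elements are needed, then fuse via the combining lemma) is a perfectly legitimate route to the result.
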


If $R$ is a subring of the ring $B$, and if $Q$ is a prime ideal of $B$, then the map $R/(Q \cap R) \longrightarrow B/Q$ is an injection, and so we can think of $R/(Q \cap R)$ as a subring of $B/Q$.  Suppose $(R, R \cap M)$ is an MG-subring of $B$. The next lemma gives sufficient conditions on an element $x \in B$ for $R[x]_{(R[x] \cap M)}$ to also be an MG-subring of $B$.

\begin{lemma}\label{preadjoining}
Let $(B,M)$ be a reduced local ring with $B/M$ uncountable, and let $\Min(B) = \{Q_1, Q_2, \ldots ,Q_n\}$ with $n \geq 2$.  Suppose $(R, R \cap M)$ is an MG-subring of $B$.  If $x \in B$ satisfies the condition that $x + Q_i \in B/Q_i$ is transcendental over $R/(Q_i \cap R)$ for $i \in \{1,2\}$, then $S = R[x]_{(R[x] \cap M)}$ is an MG-subring of $B$ with $|S| = |R|$.
\end{lemma}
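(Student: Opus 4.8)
The plan is to verify the three defining conditions of an MG-subring for $S = R[x]_{(R[x]\cap M)}$; everything is routine except the equality $S\cap Q_1 = S\cap Q_2$, which is where the transcendence hypothesis does the real work.

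I would first dispose of the cardinality and quasi-local statements. Since $R$ is infinite, $|R[x]| = |R|$, and passing to a localization does not increase cardinality, so $|R|\le|S|\le|R[x]|=|R|$, giving $|S|=|R|$; in particular $S$ is infinite and $|S|<|B/M|$. Next, since any element of $R[x]$ outside $R[x]\cap M$ lies outside $M$ and is therefore a unit of $B$, the inclusion $R[x]\hookrightarrow B$ extends to an injection $S\hookrightarrow B$, so that $S$ is a genuine subring of $B$; it is quasi-local with maximal ideal $(R[x]\cap M)S$, and I would check $(R[x]\cap M)S = S\cap M$ by observing that an element of $S\cap M$ which were a unit of $S$ would be a unit of $B$, a contradiction.

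The main step is to prove that for every $s\in S$ one has $s\in Q_1$ if and only if $s\in Q_2$; this immediately yields $S\cap Q_1=S\cap Q_2$. Set $P := R\cap Q_1 = R\cap Q_2$ (these are equal since $R$ is an MG-subring), and recall that $R/P$ embeds into the domain $B/Q_i$ for $i=1,2$, with $x+Q_i$ transcendental over $R/P$ there by hypothesis. Given $s\in S$, write $s=t/u$ with $t\in R[x]$ and $u\in R[x]\setminus(R[x]\cap M)$; since $u$ is a unit of $B$, we get $s\in Q_i\iff t\in Q_i$, so it suffices to handle $t=\sum_{j=0}^d a_jx^j\in R[x]$. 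Then $t\in Q_1$ says exactly that $\sum_{j=0}^d(a_j+Q_1)(x+Q_1)^j = 0$ in $B/Q_1$, and because the coefficients $a_j+Q_1$ lie in $R/P$ and $x+Q_1$ is transcendental over $R/P$, this forces $a_j\in P$ for all $j$. Running the identical argument modulo $Q_2$ shows $t\in Q_2$ is likewise equivalent to $a_j\in P$ for all $j$, whence $t\in Q_1\iff t\in Q_2$, as needed.

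I do not expect a serious obstacle: the one place requiring a little care is the reduction from $S$ to $R[x]$, where it matters that denominators lie outside $M$ and hence are units of $B$ that cannot affect membership in the $Q_i$. It is also worth noting that the hypothesis only controls $x$ modulo $Q_1$ and $Q_2$, precisely the two minimal primes appearing in the MG condition, and that transcendence is exactly the leverage that lets membership in $Q_i$ be detected coefficient-by-coefficient in a way symmetric in $i=1,2$.
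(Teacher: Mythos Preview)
Your proof is correct and follows essentially the same approach as the paper: both use the transcendence hypothesis to show that membership of a polynomial $\sum a_jx^j$ in $Q_i$ forces each $a_j\in R\cap Q_1=R\cap Q_2$, yielding $R[x]\cap Q_1=R[x]\cap Q_2$ and hence $S\cap Q_1=S\cap Q_2$. You are somewhat more explicit than the paper about the reduction from $S$ to $R[x]$ via units and about why $S$ genuinely embeds in $B$, but the core argument is identical.
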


\begin{proof}
Since $R$ is infinite, $|S| = |R|$, and we have $|S| < |B/M|$.  Now suppose $f \in R[x] \cap Q_1$.  Then $f = r_mx^m + \cdots + r_1x + r_0 \in Q_1$ where $r_j \in R$ for $0 \leq j \leq m$.  Since $x + Q_1$ is transcendental over $R/(R \cap Q_1)$, we have $r_j \in R \cap Q_1 = R \cap Q_2$.  Hence, $f \in Q_2$, and so $R[x] \cap Q_1 \subseteq R[x] \cap Q_2$.  Similarly, $R[x] \cap Q_2 \subseteq R[x] \cap Q_1$, and therefore $R[x] \cap Q_1 = R[x] \cap Q_2$.  It follows that $S \cap Q_1 = S \cap Q_2$, and so $S$ is an MG-subring of $B$.
\end{proof}

We now use Lemma \ref{preadjoining} to show that we can adjoin very specific elements to an MG-subring of $B$ to obtain a larger MG-subring of $B$.  Lemma \ref{adjoining} is very useful and will be employed several times.

\begin{lemma}\label{adjoining}
Let $(B,M)$ be a reduced local ring with $B/M$ uncountable, and let $\Min(B) = \{Q_1, Q_2, \ldots ,Q_n\}$ with $n \geq 2$.  Suppose $(R, R \cap M)$ is an MG-subring of $B$.  Let $b \in B$ and let $z \in B$ such that $z \not\in Q_1$ and $z \not\in Q_2$.  Let $J$ be an ideal of $B$ such that $J \not\subseteq Q_1$ and $J \not\subseteq Q_2$.  Then there is an element $w \in J$ such that $S = R[b + zw]_{(R[b + zw] \cap M)}$ is an MG-subring of $B$ with $|S| = |R|$.
\end{lemma}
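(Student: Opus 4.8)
The plan is to find $w \in J$ so that the element $x = b + zw$ satisfies the hypothesis of Lemma \ref{preadjoining}, namely that $x + Q_i$ is transcendental over $R/(Q_i \cap R)$ for $i = 1, 2$; then Lemma \ref{preadjoining} immediately gives that $S = R[b + zw]_{(R[b+zw] \cap M)}$ is an MG-subring with $|S| = |R|$. So the whole task reduces to a counting/avoidance argument showing that the ``bad'' set of $w$'s — those for which $b + zw + Q_i$ is algebraic over $R/(Q_i \cap R)$ for $i=1$ or $i=2$ — is too small to exhaust $J$.

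First I would set up the bad set precisely. For a fixed $i \in \{1,2\}$, the element $b + zw + Q_i \in B/Q_i$ is algebraic over the subring $R/(Q_i \cap R)$ iff it is a root of some nonzero polynomial with coefficients in $R/(Q_i \cap R)$. Since $R$ is infinite with $|R| < |B/M|$, the set of such polynomials has cardinality $|R| < |B/M|$, and each has finitely many roots in the domain $B/Q_i$ (here I use that $B/Q_i$ is a domain because $Q_i$ is prime — a minimal prime of the reduced ring $B$), so the set $E_i$ of elements of $B/Q_i$ that are algebraic over $R/(Q_i \cap R)$ has cardinality at most $|R| < |B/M|$. Pulling back: $b + zw \in Q_i + e$ for some $e$ in a fixed set of representatives of $E_i$, i.e. $zw \in Q_i + (e - b)$. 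Because $z \notin Q_i$, the image $\bar z$ of $z$ in the domain $B/Q_i$ is a nonzero element, so for each fixed coset value the equation $\bar z \bar w = \overline{e-b}$ has at most... hmm, in a domain that's not a field there may be no solution or the solution set $\bar w$ is a single coset of the annihilator, but in any case the set of admissible $w + Q_i$ is contained in a set of size at most $|E_i| \le |R|$; so the bad $w$'s for index $i$ lie in $\bigcup \{Q_i + r \mid r \in D_i\}$ for some $D_i \subseteq B$ with $|D_i| \le |R| < |B/M|$.

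Now I would invoke Lemma \ref{primeavoid} (Heitmann's generalized prime avoidance). Take $C = \{Q_1, Q_2\}$, take $D = D_1 \cup D_2$ with $|C \times D| \le 2|R| < |B/M|$, and take $I = J$; the hypothesis $J \not\subseteq Q_1$ and $J \not\subseteq Q_2$ is exactly $I \not\subseteq P$ for all $P \in C$. The lemma then yields an element $w \in J$ with $w \notin \bigcup\{Q_i + r \mid i \in \{1,2\},\ r \in D\}$, which by construction means $b + zw + Q_i$ is transcendental over $R/(Q_i \cap R)$ for both $i = 1, 2$. Apply Lemma \ref{preadjoining} and we are done.

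I expect the main obstacle to be the bookkeeping in the middle step: carefully showing that the set of $w$ for which $zw + Q_i$ lands in the algebraic closure of $R/(Q_i\cap R)$ inside $B/Q_i$ really is covered by $\bigcup\{Q_i + r : r \in D_i\}$ with $D_i$ small — in particular handling the fact that $B/Q_i$ need not be a field, so ``dividing by $\bar z$'' must be phrased as: the map $w \mapsto zw + Q_i$ has fibers that are cosets of $(Q_i :_B z)$, hence the preimage of the size-$\le|R|$ set $E_i$ is a union of $\le |R|$ cosets, from which one extracts the representative set $D_i$. Everything else (the cardinality estimate on polynomials, finitely many roots in a domain, the reduction via Lemma \ref{preadjoining}, and the application of Lemma \ref{primeavoid}) is routine.
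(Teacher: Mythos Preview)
Your proposal is correct and follows essentially the same route as the paper: set up the bad set via a cardinality bound on the algebraic elements, apply Lemma~\ref{primeavoid} with $C=\{Q_1,Q_2\}$ and $I=J$, then invoke Lemma~\ref{preadjoining}. The paper handles your ``main obstacle'' more directly by observing that $b+tz+Q_i = b+t'z+Q_i$ iff $t+Q_i = t'+Q_i$ (since $z\notin Q_i$ and $Q_i$ is prime), which immediately gives an injection $t+Q_i \mapsto b+zt+Q_i$ and hence $|D_i|\le |R|$ without any detour through colon ideals or fiber analysis; but your argument via $(Q_i:_B z)=Q_i$ reaches the same conclusion.
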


\begin{proof}
Let $i \in \{1,2\}$, and suppose $b + tz + Q_i = b + t'z + Q_i$ with $t, t' \in B$.  Then $z(t - t') \in Q_i$ and since $z \not\in Q_i$, $t + Q_i = t' + Q_i$.  Therefore, $b + tz + Q_i = b + t'z + Q_i$  if and only if $t + Q_i = t' + Q_i$.  Let $D_i$ be a full set of coset representatives for the cosets $t + Q_i \in B/Q_i$ that make $b + zt + Q_i$ algebraic over $R/(R \cap Q_i)$.  Note that $|D_i| \leq |R|$.  Define $D = D_1 \cup D_2$ and $C = \{Q_1, Q_2\}$.  Then $|C \times D| \leq |R| < |B/M|$.  By Lemma \ref{primeavoid} using $I = J$, there is an element $w \in J$ such that $w \not\in \bigcup \{P + r \, | \, P \in C, r \in D\}.$ Then $b + zw + Q_i$ is transcendental over $R/(R \cap Q_i)$ for $i \in \{1,2\}$.  By Lemma \ref{preadjoining}, $S = R[b + zw]_{(R[b + zw] \cap M)}$ is an MG-subring of $B$ and $|S| = |R|$.  
\end{proof}

Recall that we want our final ring to have the same completion as $B$.  To achieve this, we use the following two propositions.

\begin{proposition}[\cite{heitmann}, Proposition 1]\label{prop:complete_machine}
If $(R,R\cap M)$ is a quasi-local subring of a complete local ring $(T,M)$, the map $R\to T/M^2$ is onto, and $IT\cap R=IR$ for every finitely generated ideal $I$ of $R$, then $R$ is Noetherian and the natural homomorphism $\widehat{R}\to T$ is an isomorphism.
\end{proposition}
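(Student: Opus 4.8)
The plan is to separate the statement into two pieces: that $R$ is Noetherian, which I would derive using only the hypotheses that $T$ is Noetherian and $IT\cap R=IR$ for finitely generated ideals $I$ of $R$; and that $\widehat{R}\to T$ is an isomorphism, which I would derive once $R$ is known to be Noetherian by exploiting the surjectivity of $R\to T/M^2$. Throughout write $\mathfrak{m}:=R\cap M$, the candidate maximal ideal of $R$.

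First I would show $R$ is Noetherian. Let $I$ be an arbitrary ideal of $R$. The ideal $IT$ of the Noetherian ring $T$ is finitely generated; since $IT$ is generated as a $T$-module by the set $I$, an ascending chain argument applied to ideals of $T$ of the form $(b_1,\dots,b_k)T$ with $b_j\in I$ yields finitely many $a_1,\dots,a_r\in I$ with $IT=(a_1,\dots,a_r)T$. Put $I_0=(a_1,\dots,a_r)R$, a finitely generated sub-ideal of $I$ satisfying $I_0T=IT$. Then $I\subseteq IT\cap R=I_0T\cap R=I_0R=I_0\subseteq I$, where the middle equality is the hypothesis applied to the finitely generated ideal $I_0$. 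Hence $I=I_0$ is finitely generated, so $R$ is Noetherian, and being quasi-local it is local with maximal ideal $\mathfrak{m}$.

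Next I would establish the completion statement. Surjectivity of $R\to T/M^2$ says $R+M^2=T$, so $M=(R\cap M)+M^2=\mathfrak{m}+M^2=\mathfrak{m}T+M^2$; since $M$ is a finitely generated $T$-module, Nakayama's lemma in $T$ gives $M=\mathfrak{m}T$, hence $M^n=\mathfrak{m}^nT$ for all $n$. As $R$ is now Noetherian, $\mathfrak{m}^n$ is a finitely generated ideal of $R$, so the hypothesis gives $M^n\cap R=\mathfrak{m}^nT\cap R=\mathfrak{m}^nR=\mathfrak{m}^n$. Also $R+M^n=T$ for every $n$, by induction (the case $n=1$ following from $R+M\supseteq R+M^2=T$): granting $R+M^n=T$, one has $M^n=\mathfrak{m}^nT=\mathfrak{m}^n(R+M^n)=\mathfrak{m}^nR+\mathfrak{m}^nM^n\subseteq R+M^{2n}\subseteq R+M^{n+1}$, whence $T=R+M^n\subseteq R+M^{n+1}$. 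Consequently each natural map $R/\mathfrak{m}^n\to T/M^n$ is injective, with kernel $(M^n\cap R)/\mathfrak{m}^n=0$, and surjective, since $R+M^n=T$, hence an isomorphism; these isomorphisms are compatible with the transition maps, so taking inverse limits yields $\widehat{R}=\varprojlim_n R/\mathfrak{m}^n\cong\varprojlim_n T/M^n=T$, the last equality because $T$ is complete, and this composite is exactly the natural homomorphism $\widehat{R}\to T$.

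The step I expect to be the main obstacle is the very first one: recognizing that Noetherianness of $R$ can be squeezed out of the finitely-generated-ideal condition alone, via the device of picking generators of $IT$ that already lie in $I$. The rest is essentially bookkeeping, provided one keeps track of two points: that ``complete local ring $T$'' is being used in the sense $T=\varprojlim_n T/M^n$, so that $\varprojlim_n T/M^n=T$; and that the filtration $\{M^n\cap R\}_n$ of $R$ really is the $\mathfrak{m}$-adic filtration, which is precisely the place where the finitely-generated-ideal hypothesis re-enters, through $M^n\cap R=\mathfrak{m}^n$.
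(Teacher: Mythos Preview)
Your proof is correct. Note, however, that the paper does not actually supply its own proof of this proposition: it is quoted verbatim from Heitmann's paper \cite{heitmann} and invoked as a black box. Your argument is essentially the standard one (and matches the original): deduce Noetherianness by choosing generators of $IT$ from inside $I$ and applying the contraction hypothesis to the finitely generated sub-ideal they span, then use Nakayama to get $M=\mathfrak{m}T$, conclude $M^n\cap R=\mathfrak{m}^n$ and $R+M^n=T$ for all $n$, and pass to the inverse limit.
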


The converse of Proposition \ref{prop:complete_machine} also holds (for a proof of this, see, for example, Proposition 2.4 in \cite{countableexcellent}).  That is, if $R$ is a local ring with completion $(T,M)$, then the map $R \longrightarrow T/M^2$ is onto and $IT \cap R = I$ for every finitely generated ideal $I$ of $R$.

\begin{proposition}\label{completionsame}
Let $(B,M)$ be a local ring and let $T = \widehat{B}$. Suppose $(S,S \cap M)$ is a quasi-local subring of $B$ such that the map $S \longrightarrow B/M^2$ is onto and $IB \cap S = I$ for every finitely generated ideal $I$ of $S$.  Then $S$ is Noetherian and $\widehat{S} = T$.  Moreover, if $B/M$ is uncountable and $|B| = |B/M|$ then $S/(S \cap M)$ is uncountable and $|S| = |S/(S \cap M)|$.
\end{proposition}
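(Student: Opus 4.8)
The plan is to reduce everything to Proposition~\ref{prop:complete_machine}, applied with the complete local ring $T = \widehat{B}$ and the quasi-local subring $S$. Since $B$ is a Noetherian local ring, Krull's intersection theorem gives $\bigcap_n M^n = 0$, so the completion map $B \to T$ is injective and we may regard $S \subseteq B \subseteq T$, where $T$ is complete local with maximal ideal $MT$. Because $T/M^nT \cong B/M^n$ for every $n$, the hypothesis that $S \to B/M^2$ is onto translates directly into $S \to T/(MT)^2$ being onto, which is one of the two hypotheses of Proposition~\ref{prop:complete_machine}.

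For the second hypothesis I must check that $IT \cap S = I$ for every finitely generated ideal $I$ of $S$. First I would apply the converse of Proposition~\ref{prop:complete_machine} (recorded in the paragraph immediately following it, or alternatively faithful flatness of $B \to \widehat{B}$) to the local ring $B$: for every finitely generated ideal $J$ of $B$ one has $JT \cap B = J$. Taking $J = IB$, which is finitely generated over $B$ since $I$ is finitely generated over $S$, gives $IT \cap B = IB$. Then $IT \cap S = (IT \cap B) \cap S = IB \cap S = I$, the last equality being exactly the hypothesis on $S$. Now Proposition~\ref{prop:complete_machine} applies with $R = S$ and yields that $S$ is Noetherian and that the natural map $\widehat{S} \to T$ is an isomorphism, i.e.\ $\widehat{S} = T$.

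It remains to handle the cardinality claim under the extra assumptions that $B/M$ is uncountable and $|B| = |B/M|$. Composing the onto map $S \to B/M^2$ with the projection $B/M^2 \to B/M$ produces an onto ring map $S \to B/M$ whose kernel is precisely $S \cap M$; hence $S/(S \cap M) \cong B/M$. In particular $S/(S \cap M)$ is uncountable and $|S/(S \cap M)| = |B/M| = |B|$. Finally, $|S| \le |B|$ because $S \subseteq B$, while $|S| \ge |S/(S \cap M)| = |B|$ because $S$ surjects onto $S/(S \cap M)$; therefore $|S| = |B| = |S/(S \cap M)|$, which finishes the proof.

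I do not anticipate a genuine obstacle here: the content is essentially the careful translation of the given hypotheses into the precise form required by Proposition~\ref{prop:complete_machine}, together with the elementary cardinality bookkeeping. The one step that needs a small argument is the implication $IB \cap S = I \Rightarrow IT \cap S = I$, and this is where the good behaviour of finitely generated ideals under the faithfully flat base change $B \to \widehat{B}$ (equivalently, the converse of Proposition~\ref{prop:complete_machine}) is used.
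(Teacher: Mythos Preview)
Your proposal is correct and follows essentially the same route as the paper: reduce to Proposition~\ref{prop:complete_machine} by checking $S \to T/(MT)^2$ is onto via $B/M^2 \cong T/(MT)^2$, and $IT \cap S = I$ via $IT \cap B = IB$ (from the converse of Proposition~\ref{prop:complete_machine}) together with the hypothesis $IB \cap S = I$. The only cosmetic difference is that the paper obtains $S/(S\cap M) \cong B/M$ by identifying both with $T/MT$, whereas you read it off directly from the surjection $S \to B/M$; both are fine.
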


\begin{proof}
Since $T$ is the completion of $B$, the map $B \longrightarrow T/(MT)^2$ is onto.  Since $M^2 \subseteq (MT)^2 \cap B$, the map $B/M^2 \longrightarrow T/(MT)^2$ is well defined and onto. By hypothesis, the map $S \longrightarrow B/M^2$ is onto, and so the map $S \longrightarrow B/M^2 \longrightarrow T/(MT)^2$ is onto.  Let $I$ be a finitely generated ideal of $S$.  Then, since $T$ is the completion of $B$,  $IT \cap B = IB$.  It follows that $IT \cap S = (IT \cap B) \cap S = IB \cap S = I$.  By Proposition \ref{prop:complete_machine}, $S$ is Noetherian and $\widehat{S} = T$.  

Now suppose $B/M$ is uncountable and $|B| = |B/M|$.  Since $T$ is the completion of both $S$ and $B$, we have $S/(S \cap M) \cong B/M \cong T/MT$.  Hence, $S/(S \cap M)$ is uncountable and $|S/(S \cap M)| = |B/M|$.  Now $|S| \leq |B| = |B/M| = |S/(S \cap M)|$, and it follows that $|S| = |S/(S \cap M)|$.
\end{proof}

Because we will use Proposition \ref{completionsame} to show that our final ring has the same completion as $B$, we want our final ring to contain an element of every coset in $B/M^2$.  The next lemma shows that we can adjoin an element of a specific coset $b + M^2$ to an MG-subring of $B$ that will result in another MG-subring of $B$.  Later in this section (Theorem \ref{gluing}), we will adjoin elements from every coset in $B/M^2$.

\begin{lemma}\label{coset}
Let $(B,M)$ be a reduced local ring with $B/M$ uncountable, and let $\Min(B) = \{Q_1, Q_2, \ldots ,Q_n\}$ with $n \geq 2$.  Let $b \in B$ and suppose $(R, R \cap M)$ is an MG-subring of $B$. Then there exists an MG-subring $(S, S \cap M)$ of $B$ such that $R \subseteq S$, $|S| = |R|$, and $S$ contains an element of the coset $b + M^2$.
\end{lemma}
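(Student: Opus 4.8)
The plan is to obtain $S$ from a single application of Lemma~\ref{adjoining}. Starting from the MG-subring $R$, that lemma produces elements of the form $b + zw$, with $w$ ranging over a chosen ideal $J$ and $z$ a chosen element of $B$ avoiding $Q_1$ and $Q_2$, such that $R[b+zw]_{(R[b+zw]\cap M)}$ is again an MG-subring of $B$ of the same cardinality as $R$. To land the adjoined element inside the prescribed coset $b + M^2$, I would arrange that $zw \in M^2$; this is automatic if I choose both $z \in M$ and $J \subseteq M$, since then $zw$ is a product of two elements of $M$.

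Concretely: first I would observe that $M \not\subseteq Q_1$ and $M \not\subseteq Q_2$. Indeed, $Q_1$ and $Q_2$ are distinct minimal primes, so neither contains the other, while both are contained in the maximal ideal $M$; hence $M \ne Q_i$, and since $Q_i \subseteq M$ this gives $M \not\subseteq Q_i$ for $i = 1,2$. By prime avoidance for the two primes $Q_1, Q_2$, we then have $M \not\subseteq Q_1 \cup Q_2$, so I may pick $z \in M$ with $z \notin Q_1$ and $z \notin Q_2$. Now apply Lemma~\ref{adjoining} to the MG-subring $R$ with the given $b$, this $z$, and the ideal $J = M$ — whose required hypotheses $J \not\subseteq Q_1$ and $J \not\subseteq Q_2$ are exactly what was just verified. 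This produces $w \in M$ such that $S = R[b+zw]_{(R[b+zw]\cap M)}$ is an MG-subring of $B$ with $|S| = |R|$. Finally, since $z, w \in M$ we have $zw \in M^2$, so $b + zw \in b + M^2$; moreover $b + zw \in S$ and $R \subseteq R[b+zw] \subseteq S$, which gives all three conclusions.

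Since everything reduces to invoking Lemma~\ref{adjoining}, there is essentially no difficulty left; the only point needing any care is the verification that $M \not\subseteq Q_1 \cup Q_2$, which is what simultaneously makes the choice of $z$ possible and supplies the hypothesis of Lemma~\ref{adjoining} for the ideal $J = M$. That verification is precisely where the standing assumption $|\Min(B)| = n \ge 2$ is used (together, via Lemma~\ref{adjoining}, with the reducedness of $B$).
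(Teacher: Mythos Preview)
Your proof is correct and follows essentially the same approach as the paper: a single application of Lemma~\ref{adjoining} with parameters chosen so that $zw \in M^2$. The paper makes the slightly simpler choice $z = 1$ and $J = M^2$ (using that $M^2 \not\subseteq Q_i$ since $Q_i$ is prime and $M \not\subseteq Q_i$), which avoids the separate prime-avoidance step you use to select $z \in M \setminus (Q_1 \cup Q_2)$.
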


\begin{proof}
Since $n \geq 2$ and $B$ is local, $M \not\subseteq Q_i$  for $i \in \{1,2\}$, and so $M^2 \not\subseteq Q_i$ for $i \in \{1,2\}$.  Use Lemma \ref{adjoining} with $J = M^2$ and $z = 1$ to find $m \in M^2$ such that $S = R[b + m]_{(R[b + m] \cap M)}$ is an MG-subring of $B$ with $|S| = |R|$.  Note that $R \subseteq S$ and $S$ contains $b + m$, an element of the coset $b + M^2$.
\end{proof}

In light of Proposition \ref{completionsame}, we want to make sure that, if $S$ is our final ring, $IB \cap S = I$ for every finitely generated ideal $I$ of $S$.  Lemma \ref{firstclose} will help us do this.

\begin{lemma}\label{firstclose}
Let $(B,M)$ be a reduced local ring with $B/M$ uncountable, and let $\Min(B) = \{Q_1, Q_2, \ldots ,Q_n\}$ with $n \geq 2$.  Let $(R, R \cap M)$ be an MG-subring of $B$.  Then, for any finitely generated ideal $I$ of $R$ and for any $c \in IB \cap R$, there is an MG-subring $(S, S \cap M)$ of $B$ such that $R \subseteq S$, $|S| = |R|$, and $c \in IS$.  
\end{lemma}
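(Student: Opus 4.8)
The plan is to reduce the statement to finitely many applications of the adjoining lemmas (Lemma~\ref{adjoining} and Lemma~\ref{coset}). Write $I = (a_1, \ldots, a_k)R$ for the given finitely generated ideal, and let $c \in IB \cap R$. Then there are elements $b_1, \ldots, b_k \in B$ with $c = a_1 b_1 + \cdots + a_k b_k$. The idea is to successively adjoin to $R$ elements $y_1, \ldots, y_k$ chosen so that $y_j$ is ``close to'' $b_j$ in the sense of agreeing with it modulo a suitable ideal, while maintaining the MG-property at each step; at the end we will have $c = a_1 y_1 + \cdots + a_k y_k$ holding in $S$, hence $c \in IS$. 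But simply adjoining the $b_j$ themselves need not preserve the MG-property (the cosets $b_j + Q_i$ might be algebraic over the residue ring), so we must instead adjoin corrected elements and absorb the correction into one final coordinate.

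Concretely, here is the sequence of steps I would carry out. First, using Lemma~\ref{adjoining} repeatedly (with $b = b_j$, $z = 1$, and $J$ chosen appropriately — say $J = B$, or more carefully an ideal avoiding $Q_1, Q_2$), obtain an ascending chain of MG-subrings $R = R_0 \subseteq R_1 \subseteq \cdots \subseteq R_{k-1}$ and elements $y_j = b_j + w_j \in R_j$ for $j = 1, \ldots, k-1$, where each $w_j$ lies in the chosen ideal and $y_j + Q_i$ is transcendental over $R_{j-1}/(R_{j-1} \cap Q_i)$ for $i \in \{1,2\}$; by the lemmas, $|R_{k-1}| = |R|$. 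Now set
\[
c' = c - (a_1 y_1 + \cdots + a_{k-1} y_{k-1}) \in R_{k-1},
\]
and observe that $c' = a_k b_k - (a_1 w_1 + \cdots + a_{k-1} w_{k-1})$. The remaining task is to realize $c'$ as $a_k$ times an element we can legitimately adjoin. For the final coordinate I would argue that we can choose the $w_j$ (which live in an ideal $J$ we are free to pick) so that $a_1 w_1 + \cdots + a_{k-1} w_{k-1}$ is itself a multiple of $a_k$ — for instance by taking $J = (a_k)B$ if $a_k \notin Q_1 \cup Q_2$ — so that $c' = a_k \tilde b_k$ for some $\tilde b_k \in B$, and then one more application of Lemma~\ref{adjoining} adjoins a transcendental correction $y_k = \tilde b_k + w_k$ of $\tilde b_k$ with $w_k \in (a_k)B$ as well, giving $c' - a_k y_k = -a_k w_k \cdot(\text{something})$; iterating this absorption or arranging the bookkeeping so the leftover is visibly in $IS$. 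The cleanest route may instead be: handle the degenerate cases where some $a_j \in Q_1 \cup Q_2$ separately (there the ideal $I$ localizes trivially modulo that minimal prime and the relation is easier to track), and otherwise work with the ideal $J = IB$ throughout so all corrections $w_j$ lie in $IB$, forcing $c - \sum a_j y_j \in IB \cap R_{k} $ to shrink at each stage until it lands in $I R_k$.

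The main obstacle I anticipate is exactly this bookkeeping of the ``correction terms'': adjoining a transcendental element forces us to perturb each $b_j$, and those perturbations $w_j$ must be controlled (kept inside $I B$, or inside $(a_j)B$) so that after substituting $y_j$ for $b_j$ the defect $c - \sum a_j y_j$ remains in $IB \cap R$ and is strictly ``closer'' to being in $I R$ — ideally already equal to an element of $I R_{\text{new}}$. Managing the case where $I \subseteq Q_1$ or $I \subseteq Q_2$ (or where individual generators $a_j$ lie in $Q_1 \cup Q_2$) requires care, since then Lemma~\ref{adjoining} cannot be applied with $J = (a_j)B$; one must either use a different ideal avoiding $Q_1, Q_2$ for those coordinates or exploit that $R \cap Q_1 = R \cap Q_2$ to rewrite the relation. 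Everything else — preservation of the MG-property, the cardinality bound $|S| = |R|$, and quasi-localness of each $R[\cdot]_{(R[\cdot]\cap M)}$ — is immediate from Lemma~\ref{preadjoining} and Lemma~\ref{adjoining}.
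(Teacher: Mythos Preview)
Your strategy of adjoining perturbed elements $y_j = b_j + w_j$ while controlling the corrections is the right shape, but as written the argument never closes: there is no progress measure. Taking $J = IB$ so that $w_j \in IB$ only guarantees that the defect $c - \sum a_j y_j$ stays in $IB \cap R_k$, which is exactly the set you started in. Taking $J = (a_k)B$ when $a_k \notin Q_1 \cup Q_2$ does reduce to the one-generator case $c' = a_k \tilde b_k$, but then your proposed final step of adjoining $y_k = \tilde b_k + w_k$ with $w_k \in (a_k)B$ leaves a residue $c' - a_k y_k = -a_k w_k$ that is again merely in $(a_k)B$, not in $(a_k)S$; iterating never terminates. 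The phrase ``the ideal localizes trivially modulo that minimal prime and the relation is easier to track'' for the degenerate case is too vague to constitute an argument.

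The two concrete ideas you are missing are exactly what the paper uses to break this loop. First, for a single generator $a \notin Q_1$ (equivalently $a \notin Q_2$, since $a \in R$ and $R \cap Q_1 = R \cap Q_2$) with $c = au$, one adjoins $u$ itself with no perturbation: if $f(u) \in R[u] \cap Q_1$ then $a^{\deg f} f(u) \in R \cap Q_1 = R \cap Q_2$, and since $a \notin Q_2$ one gets $f(u) \in Q_2$, establishing the MG-property directly. Second, for $a \in Q_1 \cap Q_2$, reducedness of $B$ makes $B_{Q_i}$ a field, so $\mathrm{ann}_B(a) \not\subseteq Q_i$ for $i = 1,2$; now Lemma~\ref{adjoining} with $J = \mathrm{ann}_B(a)$ yields $w$ with $aw = 0$, so the perturbation is invisible to the relation and $a(u+w) = c$. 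With these two tricks the one-generator case is settled, and the paper's induction on the number of generators (absorbing the perturbation of $b_1$ into the $y_2$-coordinate when $y_2 \notin Q_1$, or into $\mathrm{ann}_B(y_1)$ when all $y_i \in Q_1$) reduces $m$ to $m-1$.
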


\begin{proof}
Let $I =( y_1, \ldots ,y_m)$.  We induct on $m$.  If $m = 1$ then $I = aR$ for $a \in R$, and $c = au$ for some $u \in B$.  
If $a = 0$, then $S = R$ works.  So assume $a \neq 0$.

First suppose $a \not\in Q_1$.  Then $a \not\in Q_2$.  We claim that $S = R[u]_{(R[u] \cap M)}$ is the desired subring of $B$.  Suppose $f \in R[u] \cap Q_1$.  Then $f = r_mu^m + \cdots + r_1u + r_0$ where $r_i \in R$.  Hence, $a^mf = r_mc^m + \cdots + r_1ca^{m - 1} + r_0a^m \in R \cap Q_1 = R \cap Q_2$.  Since $a \not\in Q_2$, we have $f \in Q_2$, and so $R[u] \cap Q_1 \subseteq R[u] \cap Q_2$.  Similarly, $R[u] \cap Q_2 \subseteq R[u] \cap Q_1$, and so $R[u] \cap Q_1 = R[u] \cap Q_2$.  It follows that $S \cap Q_1 = S \cap Q_2$. Note that $R \subseteq S$, $|S| = |R|$, and $c \in IS$.

Now assume that $a \in Q_1$.  Then $a \in Q_2$.  Since $B$ is reduced, $B_{Q_1}$ is a field, and so $\mbox{ann}_B(a) \not\subseteq Q_1$.  Similarly, $\mbox{ann}_B(a) \not\subseteq Q_2$. Using Lemma \ref{adjoining} with $z = 1$, there exists $w \in \mbox{ann}_B(a)$ such that $S = R[u + w]_{(R[u + w] \cap M)}$ is an MG-subring of $B$ with $|S| = |R|$.  Now, $u + w \in S$ and $a(u + w) = au = c$, and so $c \in IS.$ This completes the base case of the induction.

Suppose that $m > 1$ and that the lemma holds for all ideals generated by fewer than $m$ elements.  We have $c = y_1b_1 + y_2b_2 + \cdots + y_mb_m$ for some $b_i \in B$. 

We first consider the case where $y_i \not\in Q_1$ for some $i$.  Without loss of generality, suppose $y_2 \not\in Q_1$.  Then $y_2 \not\in Q_2$.  Use Lemma \ref{adjoining} with $J = B$ to find $w \in B$ such that $S' = R[b_1 + y_2w]_{(R[b_1 + y_2w] \cap M)}$ is an MG-subring of $B$ with $|S'| = |R|$. Note that $$c = y_1b_1 + y_1y_2w - y_1y_2w + y_2b_2 + \cdots + y_mb_m = y_1(b_1 + y_2w) + y_2(b_2 - y_1w)+ \cdots + y_mb_m.$$    Now consider the ideal $(y_2, \ldots ,y_m)$ of $S'$ and let $c^* = c  -  y_1(b_1 + y_2w)$.  Then, $c^* \in (y_2, \ldots ,y_m)B \cap S'$.  By our induction assumption, there is an MG-subring $(S, S \cap M)$ of $B$ such that $S' \subseteq S$, $|S| = |S'|$, and $c^* \in (y_2, \ldots ,y_m)S$.  So we have $c^* = y_2s_2 + \cdots + y_ms_m$ for some $s_i \in S$.  Hence, $c = c^* + y_1(b_1 + y_2w) \in (y_1, \ldots ,y_m)S = IS$, and it follows that $S$ is the desired MG-subring of $B$.

We now consider the case where $y_i \in Q_1$ for all $i = 1,2 \ldots, m$.  Then $y_i \in Q_2$ for all $i = 1,2 \ldots, m$. As before, $\mbox{ann}_B(y_1) \not\subseteq Q_1$ and $\mbox{ann}_B(y_1) \not\subseteq Q_2$.  Use Lemma \ref{adjoining} with $J = \mbox{ann}_B(y_1)$ and $z = 1$ to find $w \in \mbox{ann}_B(y_1)$ such that $S' = R[b_1 + w]_{(R[b_1 + w] \cap M)}$ is an MG-subring of $B$ with $|S'| = |R|$.  Consider the ideal $(y_2, \ldots ,y_m)$ of $S'$ and let $c^* = c  -  y_1(b_1 + w)$. Then $c^* \in (y_2, \ldots ,y_m)B \cap S'$, so by our induction assumption there is an MG-subring $(S, S \cap M)$ of $B$ such that $S' \subseteq S$, $|S| = |S'|$, and $c^* \in (y_2, \ldots ,y_m)S$.  So we have $c^* = y_2s_2 + \cdots + y_ms_m$ for some $s_i \in S$.  Since $c = c^* + y_1(b_1 + w)$, we have $c \in (y_1, y_2, \ldots ,y_m)S = IS$, and it follows that $S$ is the desired MG-subring of $B$.
\end{proof}

To ensure that $B$ and our final ring $S$ have the same spectrum except at the minimal prime ideals, we guarantee that, if $J$ is an ideal of $B$ of positive height, then $S$ contains a generating set for $J$.  In Lemma \ref{generators}, we show that, for a particular ideal $J$ of $B$, we can start with an MG-subring, and adjoin appropriate elements so that the resulting ring is not only an MG-subring of $B$, but it also contains a generating set for $J$.

\begin{lemma}\label{generators}
Let $(B,M)$ be a reduced local ring with $B/M$ uncountable, and let $\Min(B) = \{Q_1, Q_2, \ldots ,Q_n\}$ with $n \geq 2$. Suppose $J$ is an ideal of $B$ with $J \not\subseteq Q_1$ and $J \not\subseteq Q_2$.  Let $(R,R \cap M)$ be an MG-subring of $B$.  Then there exists an MG-subring $(S, S \cap M)$ of $B$ such that $R \subseteq S$, $|R| = |S|$, and $S$ contains a generating set for $J$.
\end{lemma}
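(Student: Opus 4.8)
The plan is to iterate Lemma~\ref{adjoining} once for each generator of $J$, adjoining elements in such a way that the resulting elements actually generate $J$ up to the ideal they generate in $B$. The subtlety is that a finite generating set $j_1,\dots,j_k$ of $J$ (which exists since $B$ is Noetherian) is a generating set over $B$, not over $S$; simply adjoining $j_1,\dots,j_k$ to $R$ gives a ring containing these elements, but it is not yet clear that the ideal they generate in that ring, contracted from $B$, is all of $J\cap(\text{that ring})$. However, for the purposes of the Gluing Theorem what we need is only that $S$ contains \emph{some} generating set for $J$ as an ideal of $B$; that is, elements $w_1,\dots,w_k\in S$ with $(w_1,\dots,w_k)B = J$. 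So the target is weaker than it might first appear.

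First I would invoke the Noetherian hypothesis on $B$ to write $J = (j_1,\dots,j_k)B$ for some $j_1,\dots,j_k\in B$. Then I would build a finite ascending chain of MG-subrings $R = R_0 \subseteq R_1 \subseteq \cdots \subseteq R_k = S$, where at stage $\ell$ I apply Lemma~\ref{adjoining} with $R_{\ell-1}$ in place of $R$. The idea is to adjoin, at stage $\ell$, an element $w_\ell$ that differs from $j_\ell$ by an element of a carefully chosen ideal, so that replacing $j_\ell$ by $w_\ell$ does not change the ideal generated. Concretely, to keep $(w_1,\dots,w_k)B = J$ while also making each new generator ``transcendental enough'' to preserve the MG property, I would apply Lemma~\ref{adjoining} with $b = j_\ell$, $z = 1$, and $J$ taken to be an ideal $J'$ with the property that $j_\ell + J' \subseteq J$ and $J' \not\subseteq Q_1$, $J'\not\subseteq Q_2$ — for instance if $j_\ell \notin Q_1 \cup Q_2$ one can simply take $J' = J$ itself (then $w_\ell = j_\ell + w$ with $w\in J$, so $w_\ell \in J$ and $j_\ell \in (w_\ell) + J$, keeping the ideal generated equal to $J$ after all $k$ steps), and if $j_\ell \in Q_1$ one first reorders so that some $j_{\ell'} \notin Q_1$ (possible since $J\not\subseteq Q_1$) and uses that generator to ``correct'' the others, exactly as in the inductive step of Lemma~\ref{firstclose}.

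A cleaner route, which I would actually write up, mimics the structure of the proof of Lemma~\ref{firstclose}: since $J\not\subseteq Q_1$ and $J\not\subseteq Q_2$, after relabeling we may assume (by prime avoidance, or by passing to a single combination) that there is $y\in J$ with $y\notin Q_1\cup Q_2$; add this $y$ to the ring first via Lemma~\ref{adjoining} (with $b=0$, $z=1$, ideal $J$), obtaining an MG-subring containing an element $y\in J\setminus(Q_1\cup Q_2)$. Now for each remaining generator $j_\ell$, apply Lemma~\ref{adjoining} with $b = j_\ell$, $z = 1$, and ideal $J$ to get $w_\ell = j_\ell + w_\ell'$ with $w_\ell'\in J$; then $w_\ell \in J$ and $j_\ell = w_\ell - w_\ell' \in (w_\ell) + J$, so after processing all generators the elements $y, w_1, \dots, w_k$ lie in $S$ and satisfy $(y,w_1,\dots,w_k)B \subseteq J$ and $j_\ell \in (y,w_1,\dots,w_k)B$ for each $\ell$ (using that the $w_\ell'$, being in $J = (j_1,\dots,j_k)B$... here one must be slightly careful and instead subtract correction terms lying in the ideal generated so far, again as in Lemma~\ref{firstclose}). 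Hence $(y,w_1,\dots,w_k)B = J$, so $S$ contains a generating set for $J$. Throughout, each application of Lemma~\ref{adjoining} preserves the MG-property and the cardinality $|S| = |R|$, and a finite union of such steps still has cardinality $|R|$ since $R$ is infinite.

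The main obstacle is the bookkeeping needed to guarantee $(y,w_1,\dots,w_k)B = J$ \emph{exactly} rather than just containment in one direction: adjoining $w_\ell = j_\ell + w_\ell'$ with $w_\ell'\in J$ makes $w_\ell\in J$, but to recover $j_\ell$ inside the ideal generated by the adjoined elements one needs $w_\ell'$ to already lie in the ideal generated by previously adjoined elements, which forces the inductive-correction argument (subtract multiples of $y$ and of earlier $w$'s) familiar from Lemma~\ref{firstclose}. I expect this is routine given that template, and no genuinely new idea beyond Lemmas~\ref{adjoining} and \ref{firstclose} is required.
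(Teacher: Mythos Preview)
Your proposal is workable in spirit but never quite closes the loop you yourself identify, and it misses a cleaner device that the paper uses.  The approach you sketch---adjoin some $y\in J\setminus(Q_1\cup Q_2)$ first, then at step $\ell$ apply Lemma~\ref{adjoining} to adjoin $w_\ell=j_\ell+w'_\ell$---does succeed \emph{provided} you take the auxiliary ideal in Lemma~\ref{adjoining} to be $(y)B$ (or $(y,w_1,\ldots,w_{\ell-1})B$) rather than $J$ itself, since $y\notin Q_1\cup Q_2$ makes that ideal avoid $Q_1,Q_2$ and then $j_\ell=w_\ell-w'_\ell\in(w_\ell,y)B$, giving $(y,w_1,\ldots,w_k)B=J$.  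You gesture at this (``correction terms lying in the ideal generated so far'') but never actually say which ideal to feed to Lemma~\ref{adjoining}, so the write-up as it stands has a gap.

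The paper avoids this bookkeeping entirely.  It fixes $z\in J\setminus(Q_1\cup Q_2)$ (never adjoined to the ring) and applies Lemma~\ref{adjoining} with this $z$ and with the auxiliary ideal equal to $M$, producing $m_\ell\in M$ so that $x_\ell+m_\ell z$ is adjoined.  The perturbation $m_\ell z$ then lies in $MJ$, so $(x_1+m_1z,\ldots,x_k+m_kz)+MJ=J$, and Nakayama's lemma gives $(x_1+m_1z,\ldots,x_k+m_kz)B=J$ immediately---no induction on previously adjoined generators, no auxiliary $y$.  Your route uses only Lemma~\ref{adjoining} with $z=1$ and so stays closer to the template of Lemma~\ref{firstclose}; the paper's route exploits the freedom in the parameter $z$ of Lemma~\ref{adjoining} and trades the inductive correction for a one-line Nakayama argument.
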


\begin{proof}
Let $J = (x_1, x_2, \ldots ,x_k)$.  By the prime avoidance theorem, there exists $z \in J$ such that $z \not\in Q_1$ and $z \not\in Q_2$.  Note that $M \not\subseteq Q_1$ and $M \not\subseteq Q_2$.
By Lemma \ref{adjoining}, there is an $m_1 \in M$ such that $R_1 = R[x_1 + m_1z]_{(R[x_1 + m_1z] \cap M)}$ is an MG-subring of $B$ and $|R_1| = |R|$.  Note that $(x_1 + m_1z, x_2, \ldots ,x_k) + MJ = J$, and so by Nakayama's Lemma, $(x_1 + m_1z, x_2, \ldots ,x_k) = J$.  Now repeat this procedure replacing $x_1$ with $x_2$, and $R$ with $R_1$ to find $m_2 \in M$ such that $R_2 = R_1[x_2 + m_2z]_{(R_1[x_2 + m_2z] \cap M)}$ is an MG-subring of $B$, $|R_2| = |R_1|$, and $J = (x_1 + m_1z, x_2 + m_2z, x_3, \ldots ,x_k)$.  Continue the procedure to find an MG-subring $R_k$ of $B$ such that $R \subseteq R_k$, $|R_k| = |R|$, $J = (x_1 + m_1z, x_2 + m_2z, \ldots ,x_k + m_{k}z)$, and $x_j + m_jz \in R_k$ for all $j = 1,2, \ldots ,k$.  Then $S = R_k$ is the desired MG-subring of $B$.
\end{proof}

Our strategy is to start with $\mathbb{Q}$ and successively adjoin uncountably many carefully chosen elements of $B$ to get our final ring.  In the process, we construct increasing chains of MG-subrings.  The next lemma ensures that the union of these increasing chains satisfy most properties of MG-subrings.

\begin{lemma}\label{unioning}
Let $(B,M)$ be a reduced local ring with $B/M$ uncountable, and let $\Min(B) = \{Q_1, Q_2, \ldots ,Q_n\}$ with $n \geq 2$.  Let $\Omega$ be a well-ordered index set and suppose that $(R_{\beta}, R_{\beta} \cap M)$ for $\beta \in \Omega$ is a family of MG-subrings of $B$ such that, if $\alpha, \mu \in \Omega$ with $\alpha < \mu$, then $R_{\alpha} \subseteq R_{\mu}$.  
Then $S = \cup_{\beta \in \Omega} R_{\beta}$ is an infinite subring of $B$ such that $S \cap Q_1 = S \cap Q_2$.  Furthermore, if there is some cardinal $\lambda < |B/M|$ such that $|R_{\beta}| \leq \lambda$ for all $\beta \in \Omega,$ and if $|\Omega| < |B/M|$, then $|S| \leq \mbox{max}\{\lambda, |\Omega|\}$, and $S$ is an MG-subring of $B$.
\end{lemma}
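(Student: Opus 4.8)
The plan is to verify each assertion directly from the definition of an MG-subring, exploiting only that $\Omega$ is well-ordered (so $\{R_\beta\}_{\beta\in\Omega}$ is a chain and its union is directed); none of the earlier adjoining lemmas are needed. First I would check that $S=\bigcup_{\beta\in\Omega}R_\beta$ is a subring of $B$: given $s,t\in S$ there are $\alpha,\mu\in\Omega$ with $s\in R_\alpha$ and $t\in R_\mu$, and setting $\nu=\max\{\alpha,\mu\}$ we have $s,t\in R_\nu$, so $s-t,\,st\in R_\nu\subseteq S$; also $1\in R_\alpha\subseteq S$. Since MG-subrings are infinite and each $R_\beta\subseteq S$, the ring $S$ is infinite (we may of course assume $\Omega\neq\emptyset$, which is implicit in the statement). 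For the equality $S\cap Q_1=S\cap Q_2$: if $x\in S\cap Q_1$ then $x\in R_\beta$ for some $\beta$, so $x\in R_\beta\cap Q_1=R_\beta\cap Q_2\subseteq S\cap Q_2$, and the reverse inclusion is symmetric. This gives the first sentence of the lemma.

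For the cardinality bound, assume $\lambda<|B/M|$ with $|R_\beta|\leq\lambda$ for all $\beta$, and $|\Omega|<|B/M|$. Since each $R_\beta$ is infinite, $\lambda\geq\aleph_0$. If $\Omega$ is finite, then $S=R_{\beta_0}$ where $\beta_0$ is the largest element of $\Omega$, so $|S|\leq\lambda\leq\max\{\lambda,|\Omega|\}$; if $\Omega$ is infinite, then $|S|\leq\sum_{\beta\in\Omega}|R_\beta|\leq|\Omega|\cdot\lambda=\max\{|\Omega|,\lambda\}$ by infinite cardinal arithmetic. Either way $|S|\leq\max\{\lambda,|\Omega|\}$, and since the maximum of two cardinals each strictly below $|B/M|$ is again strictly below $|B/M|$, we conclude $|S|<|B/M|$.

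Finally, to conclude that $S$ is an MG-subring I would check that $(S,S\cap M)$ is quasi-local with maximal ideal $S\cap M$. The ideal $S\cap M$ is proper because $1\notin M$. If $x\in S\setminus(S\cap M)$, choose $\beta$ with $x\in R_\beta$; then $x\notin R_\beta\cap M$, and since $R_\beta$ is quasi-local with maximal ideal $R_\beta\cap M$, the element $x$ is a unit in $R_\beta$, hence a unit in $S$. Thus every non-unit of $S$ lies in $S\cap M$, so $S\cap M$ is the unique maximal ideal of $S$. Together with $S$ infinite, $|S|<|B/M|$, and $S\cap Q_1=S\cap Q_2$, this shows $S$ is an MG-subring of $B$. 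I do not expect a genuine obstacle here; the only points needing a little care are the cardinal arithmetic (separating the cases $\Omega$ finite and $\Omega$ infinite, and $\lambda$ versus $|\Omega|$ dominating) and observing that units are preserved under passing to the union, which is what makes quasi-localness pass to $S$.
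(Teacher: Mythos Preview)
Your proof is correct and follows the same approach as the paper's; the paper's proof simply declares the first sentence ``clear,'' writes $|S|\leq\lambda|\Omega|=\max\{\lambda,|\Omega|\}$, and asserts that the MG-subring conclusion follows, whereas you have carefully unpacked each of these steps (the subring check, the quasi-local verification via units, and the finite/infinite case split in the cardinal arithmetic). There is no substantive difference in strategy.
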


\begin{proof}
It is clear that $S$ is infinite and $S \cap Q_1 = S \cap Q_2$.  Now, suppose there is some cardinal $\lambda < |B/M|$ such that $|R_{\beta}| \leq \lambda$ for all $\beta \in \Omega,$ and $|\Omega| < |B/M|$.   Then $S \leq \lambda |\Omega| = \mbox{max}\{\lambda, |\Omega|\}$.  So, $|S| < |B/M|$, and it follows that $(S, S \cap M)$ is an MG-subring of $B$.
\end{proof}

The next two results show that we can construct a subring of $B$ that satisfies several of our desired properties simultaneously.  Before we state and prove the results, we state a technical definition.

\begin{definition}
Let $\Psi$ be a well-ordered set and let $\alpha \in \Psi$.  Define $\gamma (\alpha) = \sup\{\beta \in \Psi \, | \, \beta < \alpha\}$.
\end{definition}

\begin{lemma}\label{closingideals}
Let $(B,M)$ be a reduced local ring with $B/M$ uncountable, and let $\Min(B) = \{Q_1, Q_2, \ldots ,Q_n\}$ with $n \geq 2$.  Let $J$ be an ideal of $B$ with $J \not\subseteq Q_1$ and $J \not\subseteq Q_2$, and let $b \in B$.  Suppose $(R, R \cap M)$ is an MG-subring of $B$.  Then there exists an MG-subring $(S, S \cap M)$ of $B$ such that $R \subseteq S$, $|S| = |R|$, $b + M^2$ is in the image of the map $S \longrightarrow B/M^2$, $S$ contains a generating set for $J$, and $IB \cap S = I$ for every finitely generated ideal $I$ of $S$.
\end{lemma}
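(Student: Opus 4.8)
The plan is to build $S$ as a countable-step transfinite union, iterating the "one-property-at-a-time" lemmas already established. Since each of the conditions we want—hitting the coset $b+M^2$, containing a generating set for $J$, and closing off finitely generated ideals so that $IB\cap S=I$—can only be arranged for one datum at a time (one coset, one ideal, one pair $(I,c)$ with $c\in IB\cap R$), the standard device is to interleave all of these tasks along a single well-ordered index set and take the union at the end.

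\medskip

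\emph{Step 1: Set up the index set and the diagonalization.} Start from the MG-subring $R$ and note $|R|<|B/M|$, and since $B/M$ is uncountable while $|R|$ is merely infinite, $|R|\le\aleph_0$ need not hold, but in all applications $|R|$ will be bounded by some $\lambda<|B/M|$; write $\lambda=\max\{|R|,\aleph_0\}$. The tasks are: (a) the single coset $b+M^2$; (b) the single ideal $J$; (c) for \emph{every} finitely generated ideal $I$ of the ring we are building and every $c\in IB\cap(\text{that ring})$, arrange $c\in I(\text{next ring})$. Task (c) is the only one that regenerates itself as the ring grows, so the construction must be iterated $\omega$ times (a countable cofinal process): build a chain $R=S_0\subseteq S_1\subseteq S_2\subseteq\cdots$ where at stage $k$ we first apply Lemma~\ref{coset} once (only at $k=0$) and Lemma~\ref{generators} once (only at $k=0$) to handle (a) and (b), and then, within each stage, run a sub-transfinite-union over all pairs $(I,c)$ with $I$ a finitely generated ideal of $S_{k}$ and $c\in IB\cap S_k$, applying Lemma~\ref{firstclose} to each such pair in turn along a well-ordering of that pair-set; call the result $S_{k+1}$. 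Finally set $S=\bigcup_{k<\omega}S_k$.

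\medskip

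\emph{Step 2: Control cardinality at every stage.} At each application of Lemmas~\ref{coset}, \ref{generators}, \ref{firstclose}, the cardinality is preserved: $|S'|=|R'|$. The set of pairs $(I,c)$ at stage $k$ has cardinality at most $|S_k|$ (finitely generated ideals of $S_k$ are indexed by finite tuples from $S_k$, and for each, the elements $c$ number at most $|S_k|$), so $|S_k\times(\text{pair set})|\le|S_k|$. By Lemma~\ref{unioning}, the sub-union producing $S_{k+1}$ from $S_k$ has cardinality $\le\max\{|S_k|,|S_k|\}=|S_k|$, hence inductively $|S_k|\le\lambda<|B/M|$ for all $k$. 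Then the final union over $k<\omega$ has cardinality $\le\max\{\lambda,\aleph_0\}=\lambda<|B/M|$, so by Lemma~\ref{unioning} again, $(S,S\cap M)$ is an MG-subring of $B$ with $|S|=|R|$, giving us $R\subseteq S$, $|S|=|R|$.

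\medskip

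\emph{Step 3: Verify the four properties of $S$.} The coset condition and the generating-set condition for $J$ pass to $S$ since $S_1\subseteq S$ and they were achieved already in $S_1$. The key point is $IB\cap S=I$ for every finitely generated ideal $I$ of $S$. Given such an $I=(y_1,\dots,y_m)S$ with generators $y_j\in S$, and given $c\in IB\cap S$: since $\omega$ is a limit and there are only finitely many $y_j$ and one $c$, there is a single $k$ with $y_1,\dots,y_m,c\in S_k$. Let $I_k=(y_1,\dots,y_m)S_k$. Then $c\in I_k B\cap S_k$, so the pair $(I_k,c)$ appeared in the stage-$k$ enumeration, whence $c\in I_k S_{k+1}\subseteq IS$. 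Thus $IB\cap S\subseteq I$; the reverse containment is trivial, so $IB\cap S=I$.

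\medskip

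\emph{Main obstacle.} The only subtle point is making the diagonalization genuinely closed: after applying Lemma~\ref{firstclose} to close one pair $(I,c)$, the ring grows and acquires new finitely generated ideals and new elements of their extensions, so a single pass does not suffice. This is exactly why one needs the two-layered (stage $k$, then union over pairs within the stage) construction followed by the $\omega$-indexed union—so that any finitely generated ideal of the \emph{final} $S$, together with any relevant $c$, already lives in some $S_k$ and was therefore handled at stage $k$. One must also be slightly careful that Lemmas~\ref{coset} and~\ref{generators} require $M^2\not\subseteq Q_i$ (automatic since $n\ge2$ and $B$ is local) and $J\not\subseteq Q_1,Q_2$ (given by hypothesis), and that these hypotheses persist as we pass to larger MG-subrings—they do, since $Q_1,Q_2$ are fixed prime ideals of $B$. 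Beyond this bookkeeping, every individual step is a direct citation of a lemma already proved.
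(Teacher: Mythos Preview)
Your proposal is correct and follows essentially the same approach as the paper's proof: first apply Lemma~\ref{coset} and Lemma~\ref{generators} to handle the coset $b+M^2$ and the generating set for $J$, then run a two-layered iteration (a transfinite pass over all pairs $(I,c)$ at each stage, followed by an $\omega$-indexed union of stages) using Lemma~\ref{firstclose} to force $IB\cap S=I$, with cardinality controlled throughout by Lemma~\ref{unioning}. The only cosmetic differences are in labeling (the paper calls the intermediate rings $R'$, $R''$, $S_1,S_2,\ldots$ rather than folding the first two applications into stage~$0$) and that the paper explicitly well-orders the pair set with no maximal element to guarantee every pair is eventually a predecessor; your argument implicitly assumes this but the intent is clear.
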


\begin{proof}\label{closing}
First use Lemma \ref{coset} to obtain an MG-subring $(R', R' \cap M)$ of $B$ such that $R \subseteq R'$, $|R'| = |R|$, and $R'$ contains an element of $b + M^2$.  Next, use Lemma \ref{generators} to get an MG-subring $(R'', R'' \cap M)$ of $B$ such that $R' \subseteq R''$, $|R''|= |R'|$, and $R''$ contains a generating set for $J$.
Define
$$\Psi = \{ (I,c) \, | \, I \mbox{ is a finitely generated ideal of } R'' \mbox{ and } c \in IB \cap R''\}.$$
Well-order $\Psi$ so that it has no maximal element, and let $0$ denote its first element.  Note that $|\Psi| \leq |R''| = |R|$.  We recursively define a family of MG-subrings $(R_{\mu}, R_{\mu} \cap M)$ of $B$ for each $\mu \in \Psi$ such that $|R_{\mu}| = |R|$ and, if $\alpha, \rho \in \Psi$ with $\alpha < \rho \leq \mu$, then $R_{\alpha} \subseteq R_{\rho}$.  Define $R_0 = R''$.  Now, for $\mu \in \Psi$ assume that $R_{\beta}$ has been defined for all $\beta < \mu$ such that $(R_{\beta}, R_{\beta} \cap M)$ is an MG-subring of $B$, $|R_{\beta}| = |R|$ and if $\alpha, \rho \leq \beta$ with $\alpha < \rho$, then $R_{\alpha} \subseteq R_{\rho}$.  Suppose $\gamma(\mu) < \mu$, and let $\gamma(\mu) = (I,c)$. Then define $(R_{\mu}, R_{\mu} \cap M)$ to be the MG-subring obtained from Lemma \ref{firstclose} such that $R_{\gamma(\mu)} \subseteq R_{\mu}$, $|R_{\gamma(\mu)}| = |R_{\mu}|$, and $c \in IR_{\mu}$.  On the other hand, if $\gamma(\mu) = \mu$, define $R_{\mu} = \cup_{\beta < \mu} R_{\beta}$.  In this case, by Lemma \ref{unioning}, $(R_{\mu}, R_{\mu} \cap M)$ is an MG-subring of $B$ with $|R_{\mu}| = |R|$.  In either case, we have that $(R_\mu, R_{\mu} \cap M)$ is an MG-subring of $B$, $|R_{\mu}| = |R|$, and if $\alpha, \rho \leq \mu$ with $\alpha < \rho$, then $R_{\alpha} \subseteq R_{\rho}$. 

Let $S_1 = \cup_{\mu \in \Psi}R_{\mu}$.  By Lemma \ref{unioning}, $(S_1, S_1 \cap M)$ is an MG-subring of $B$ and $|S_1| = |R|$.  Let $I$ be a finitely generated ideal of $R''$ and let $c \in IB\cap R''$.  
Then $(I,c) = \gamma (\mu)$ 
for some $\gamma(\mu) \in \Psi$ with $\gamma(\mu) < \mu$.
By construction, $c \in IR_{\mu} \subseteq IS_1$.  It follows that $IB \cap R'' \subseteq IS_1$ for every finitely generated ideal $I$ of $R''$.

Repeat this process with $R''$ replaced by $S_1$ to obtain an MG-subring $(S_2, S_2 \cap M)$ of $B$ with $S_1 \subseteq S_2$, $|S_2| = |S_1|$, and $IB \cap S_1 \subseteq IS_2$ for every finitely generated ideal $I$ of $S_1$.  Continue to obtain a chain of MG-subrings $R'' \subseteq S_1, \subseteq S_2, \subseteq \cdots$ with $S_i \subseteq S_{i + 1}$, $|S_{i + 1}| = |S_i|$ and $IB \cap S_i \subseteq IS_{i + 1}$ for every finitely generated ideal $I$ of $S_i$.

Let $S = \cup_{i = 1}^{\infty}S_i$.  By Lemma \ref{unioning}, $(S, S \cap M)$ is an MG-subring of $B$ with $|S| = |R|$.  Now suppose $I$ is a finitely generated ideal of $S$, and $c \in IB \cap S$.  Then $I = (s_1, \ldots ,s_k)$ for $s_i \in S$.  Choose $N$ such that $c, s_1, \ldots ,s_k \in S_N$.  Then $c \in IB \cap S_N \subseteq IS_{N + 1} \subseteq IS$.  It follows that $IB \cap S = I$, and so $S$ is the desired MG-subring of $B$.
\end{proof}

\begin{theorem}\label{gluing}
Let $(B,M)$ be a reduced local ring with $B/M$ uncountable and $|B| = |B/M|$.  Suppose $B$ contains the rationals and $\Min(B) = \{Q_1, Q_2, \ldots ,Q_n\}$ with $n \geq 2$.  Then there is a quasi-local ring $S \subseteq B$ with maximal ideal $S \cap M$ such that the map $S \longrightarrow B/M^2$ is onto, $IB \cap S = I$ for every finitely generated ideal $I$ of $S$, $Q_1 \cap S = Q_2 \cap S$, and, if $J$ is an ideal of $B$ with $J \not\subseteq Q_1$ and $J \not\subseteq Q_2$, then $S$ contains a generating set for $J$.
\end{theorem}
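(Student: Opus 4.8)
The plan is to realize $S$ as the union of a transfinite increasing chain of MG-subrings of $B$, starting from $\Q$ and closing off all of the required properties along the way by repeatedly invoking Lemma \ref{closingideals}. First note that $\Q$ is an MG-subring of $B$: it is a subring since $B$ contains the rationals, it is infinite with $|\Q| = \aleph_0 < |B/M|$ since $B/M$ is uncountable, it is quasi-local with maximal ideal $\Q \cap M = (0)$, and $\Q \cap Q_1 = (0) = \Q \cap Q_2$ because $\Q$ is a field and $Q_1, Q_2$ are proper. For the index set I would take $\mathcal{P}$ to be the set of all pairs $(J,b)$ where $J$ is an ideal of $B$ with $J \not\subseteq Q_1$ and $J \not\subseteq Q_2$ and $b \in B$. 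Since $B$ is Noetherian every ideal of $B$ is finitely generated, so $B$ has at most $|B|$ ideals, whence $|\mathcal{P}| \le |B|\cdot|B| = |B| = |B/M|$; moreover $\mathcal{P}$ is infinite, as it contains $(B,b)$ for every $b \in B$. Well-order $\mathcal{P}$ so that it has no maximal element and so that every proper initial segment has cardinality less than $|B/M|$ (for instance, use a well-ordering of order type $|\mathcal{P}|$), and let $0$ denote its least element.

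Next I would recursively build a family of MG-subrings $(R_{\mu}, R_{\mu} \cap M)$ of $B$, one for each $\mu \in \mathcal{P}$, such that $R_{\alpha} \subseteq R_{\mu}$ whenever $\alpha < \mu$ and $|R_{\mu}| \le \max\{\aleph_0, |\{\beta \in \mathcal{P} : \beta < \mu\}|\}$. Set $R_0 = \Q$. For $\mu > 0$ with $\gamma(\mu) < \mu$, write $\gamma(\mu) = (J,b)$ and let $(R_{\mu}, R_{\mu} \cap M)$ be the MG-subring obtained by applying Lemma \ref{closingideals} to the MG-subring $R_{\gamma(\mu)}$, the ideal $J$, and the element $b$; thus $R_{\gamma(\mu)} \subseteq R_{\mu}$, $|R_{\mu}| = |R_{\gamma(\mu)}|$, $b + M^2$ lies in the image of $R_{\mu} \to B/M^2$, $R_{\mu}$ contains a generating set for $J$, and $I'B \cap R_{\mu} = I'$ for every finitely generated ideal $I'$ of $R_{\mu}$. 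For $\mu > 0$ with $\gamma(\mu) = \mu$, set $R_{\mu} = \bigcup_{\beta < \mu} R_{\beta}$; by Lemma \ref{unioning} this is an MG-subring of $B$, since $|R_{\beta}| < |B/M|$ for all $\beta < \mu$ and $|\{\beta < \mu\}| < |B/M|$. An easy induction (successor steps preserve cardinality, limit steps at worst multiply by $|\{\beta < \mu\}|$) confirms the cardinality bound, so every $R_{\mu}$ is indeed an MG-subring and Lemma \ref{closingideals} always applies.

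Then I would set $S = \bigcup_{\mu \in \mathcal{P}} R_{\mu}$. This is a quasi-local subring of $B$ with maximal ideal $S \cap M$ (an increasing union of quasi-local rings, each with maximal ideal the contraction of $M$), and $S \cap Q_1 = S \cap Q_2$ by Lemma \ref{unioning}. Given $b \in B$, some $\gamma(\mu) = (B,b)$, so $b + M^2$ lies in the image of $R_{\mu} \to B/M^2$, hence in that of $S \to B/M^2$; thus $S \to B/M^2$ is onto. Given an ideal $J$ of $B$ with $J \not\subseteq Q_1$ and $J \not\subseteq Q_2$, some $\gamma(\mu) = (J,0)$, so $R_{\mu} \subseteq S$ contains a generating set for $J$. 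Finally, given a finitely generated ideal $I = (s_1,\ldots,s_k)S$ of $S$ and $c \in IB \cap S$, choose $\nu \in \mathcal{P}$ large enough that $c, s_1, \ldots, s_k \in R_{\nu}$, let $\mu$ be the immediate successor of $\nu$ in $\mathcal{P}$ (so $\gamma(\mu) = \nu < \mu$ and $R_{\mu}$ was produced by Lemma \ref{closingideals}), and observe that $c \in (s_1,\ldots,s_k)B \cap R_{\mu} = (s_1,\ldots,s_k)R_{\mu} \subseteq IS = I$; hence $IB \cap S = I$.

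The construction is essentially bookkeeping, because Lemma \ref{closingideals} already carries out the delicate work — in particular it closes off \emph{every} finitely generated ideal of its output ring at each successor step, which is precisely what makes the single union $S = \bigcup_{\mu} R_{\mu}$ (rather than an iterated $\omega$-indexed union) suffice for $IB \cap S = I$. The one point that genuinely requires care is the cardinality bookkeeping: one must ensure no $R_{\mu}$ ever reaches cardinality $|B/M|$, which relies on successor steps preserving cardinality together with proper initial segments of $\mathcal{P}$ having cardinality $< |B/M|$; it is here, in bounding $|\mathcal{P}|$ via the count of ideals of the Noetherian ring $B$, that the hypothesis $|B| = |B/M|$ is used.
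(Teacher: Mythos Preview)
Your proof is correct and follows essentially the same route as the paper's own argument: start at $\Q$, run a transfinite recursion invoking Lemma~\ref{closingideals} at successor steps and Lemma~\ref{unioning} at limit steps, and exploit that each successor output is already closed under $I'B\cap R_\mu=I'$ to verify $IB\cap S=I$ by passing to a successor above any finite data. The only cosmetic difference is bookkeeping: the paper indexes over $B/M^2$ together with a surjection onto the relevant set of ideals, while you index directly over pairs $(J,b)$; both arrangements amount to the same construction.
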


\begin{proof}
First note that if
$$ \Psi = \{J \mbox{ an ideal of } B \, | \, J \not\subseteq Q_1 \mbox{ and } J \not\subseteq Q_2 \}$$
then $|\Psi| \leq |B|$
and $|B/M^2| = |B/M| = |B|$.  Well-order $B/M^2$ using an index set $\Omega$ such that $0$ is the initial element of $\Omega$ and every element of $\Omega$ has fewer than $|\Omega|$ predecessors.  Let $b_{\alpha} + M^2$ be the element of $B/M^2$ corresponding to $\alpha \in \Omega$.  
Fix a surjective map $f$ from $\Omega$ to $\Psi$, and, for $\alpha \in \Omega$, define $J_{\alpha} = f(\alpha)$ .

We recursively define $R_{\alpha}$ for each $\alpha \in \Omega$.
First, define $R_0 = \mathbb{Q}$ and note that $\mathbb{Q}$ is an MG-subring of $B$.  Let $\alpha \in \Omega$ and assume $R_{\beta}$ has been defined for all $\beta < \alpha$ such that $(R_{\beta}, R_{\beta} \cap M)$ is an MG-subring of $B$ and $|R_{\beta}| \leq |\{\mu \in \Omega \, | \, \mu < \beta\}||R_0|$. If $\gamma(\alpha) < \alpha$, define $(R_{\alpha}, R_{\alpha} \cap M)$ to be the MG-subring of $B$ subring obtained from Lemma \ref{closingideals} such that $R_{\gamma(\alpha)} \subseteq R_{\alpha}$, $|R_{\gamma(\alpha)}| = |R_{\alpha}|$, $b_{\gamma(\alpha)} + M^2$ is in the image of the map $R_{\alpha} \longrightarrow B/M^2$, $R_{\alpha}$ contains a generating set for $J_{\gamma(\alpha)}$, and $IB \cap R_{\alpha} = IR_{\alpha}$ for every finitely generated ideal $I$ of $R_{\alpha}$.  Then $|R_{\alpha}| = |R_{\gamma(\alpha)}| \leq |\{\mu \in \Omega \, | \, \mu < \gamma(\alpha)\}||R_0| = |\{\mu \in \Omega \, | \, \mu < \alpha\}||R_0|$.
If $\gamma(\alpha) = \alpha$, define $R_{\alpha} = \cup_{\beta < \alpha} R_{\beta}$.  In this case, $|\{\mu \in \Omega \, | \, \mu < \alpha \}| < |\Omega| = |B/M|$, and $|R_{\alpha}| \leq |\{\mu \in \Omega \, | \, \mu < \alpha\}||R_0|$.  By Lemma \ref{unioning}, $(R_{\alpha}, R_{\alpha} \cap M)$ is an MG-subring of $B$.  

Define $S = \cup_{\alpha \in \Omega}R_{\alpha}$.  By construction, $S \cap M$ is the maximal ideal of $S$, the map $S \longrightarrow B/M^2$ is onto, $Q_1 \cap S = Q_2 \cap S$ and, if $J$ is an ideal of $B$ with $J \not\subseteq Q_1$ and $J \not\subseteq Q_2$, then $S$ contains a generating set for $J$.  Let $I = (s_1, \ldots s_m)$ be a finitely generated ideal of $S$ and let $c \in IB \cap S$.  Then, for some $\mu \in \Omega$
with $I'B \cap R_{\mu} = I'$, for every finitely generated ideal $I'$ of $R_{\mu}$
we have $c, s_1, \ldots ,s_m \in R_{\mu}$.  It follows that $c \in (s_1, \ldots ,s_m)B \cap R_{\mu} = (s_1, \ldots ,s_m)R_{\mu} \subseteq (s_1, \ldots, s_m)S = I$.  Hence, we have that $IB \cap S = I$ for every finitely generated ideal $I$ of $S$.
\end{proof}


Before we state and prove the Gluing Theorem, we make two observations.  First, suppose that $(B,M)$ is a local ring and $(S,S \cap M)$ is a subring of $B$ with the same completion as $B$.  Let $Q$ be a minimal prime ideal of $B$.  Then there is a minimal prime ideal $\widehat{Q}$ of $\widehat{B}$ such that $B \cap \widehat{Q} = Q$. Note that $S \cap \widehat{Q}$ is a minimal prime ideal of $S$. It follows that $S \cap Q = S \cap (B \cap \widehat{Q}) = S \cap \widehat{Q}$ is a minimal prime ideal of $S$.  Therefore, if $Q$ is a minimal prime ideal of $B$ then $S \cap Q$ is a minimal prime ideal of $S$.  

Second, suppose $S$ is a subring of the ring $B$ and $P$ is a prime ideal of $B$ with positive height satisfying $(S \cap P)B = P$.  Then $S \cap P$ is not a minimal prime ideal of $S$.  To see this, observe that since $P$ has positive height, it strictly contains a minimal prime ideal $Q$ of $B$.  If $S \cap P$ is a minimal prime ideal of $S$, then $S \cap P = S \cap Q$ and so $P = (S \cap P)B = (S \cap Q)B \subseteq Q$, a contradiction.

We are now ready to state and prove The Gluing Theorem.  In our proof, we use Theorem \ref{gluing} and induct on the number of minimal prime ideals of $B$. 

\begin{theorem}(The Gluing Theorem)\label{biggluing}
Let $(B,M)$ be a reduced local ring containing the rationals with $B/M$ uncountable and $|B| = |B/M|$.  
Suppose $\Min(B)$ is partitioned into $m \geq 1$ subcollections $C_1, \ldots ,C_m$. Then there is a reduced local ring $S \subseteq B$ with maximal ideal $S \cap M$ such that 
\medskip
\begin{enumerate}
\item $S$ contains the rationals, \\
\item $\widehat{S} = \widehat{B}$, \\
\item $S/(S \cap M)$ is uncountable and $|S| = |S/(S \cap M)|$, \\
\item For $Q, Q' \in \Min(B)$, $Q \cap S = Q' \cap S$ if and only if there is an $i \in \{1,2, \ldots ,m\}$ with $Q \in C_i$ and $Q' \in C_i$, \\
\item The map $f:\Spec(B) \longrightarrow \Spec(S)$ given by $f(P) = S \cap P$ is onto and, if $P$ is a prime ideal of $B$ with positive height, then $f(P)B = P$.  In particular, if $P$ and $P'$ are prime ideals of $B$ with positive height, then $f(P)$ has positive height and $f(P) = f(P')$ implies that $P = P'$. \\
\end{enumerate}
\end{theorem}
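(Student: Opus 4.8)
The plan is to induct on the number $n = |\Min(B)|$ of minimal prime ideals of $B$, using Theorem \ref{gluing} to perform one ``two--prime gluing'' at each step and then applying the inductive hypothesis to the ring produced. For the base of the induction, note that if $m = n$ --- in particular if $n = 1$ --- then every $C_i$ is a singleton and $S = B$ already satisfies (1)--(5), with $f$ the identity. So assume $m < n$, hence $n \geq 2$, and fix distinct $Q_1, Q_2$ lying in a common block, say $Q_1, Q_2 \in C_1$. Apply Theorem \ref{gluing} to $B$ with this choice of $Q_1, Q_2$ to obtain a quasi-local $S' \subseteq B$ with maximal ideal $S' \cap M$ such that $S' \to B/M^2$ is onto, $IB \cap S' = I$ for every finitely generated ideal $I$ of $S'$, $Q_1 \cap S' = Q_2 \cap S'$, and $S'$ contains a generating set for every ideal $J$ of $B$ with $J \not\subseteq Q_1$ and $J \not\subseteq Q_2$. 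By Proposition \ref{completionsame}, $S'$ is local with $\widehat{S'} = \widehat{B}$, its residue field is uncountable, and $|S'| = |S'/(S' \cap M)|$; moreover $S'$ is reduced (a subring of the reduced ring $B$) and contains $\mathbb{Q}$ (the construction of Theorem \ref{gluing} starts from $\mathbb{Q}$), so $S'$ satisfies all the hypotheses of the theorem and has one fewer minimal prime than $B$.

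Before recursing I would establish three facts about the inclusion $S' \subseteq B$. First, $\mathfrak{q} B \cap S' = \mathfrak{q}$ for every prime $\mathfrak{q}$ of $S'$ --- any element of $\mathfrak{q} B \cap S'$ lies in $IB$ for some finitely generated $I \subseteq \mathfrak{q}$, hence in $IB \cap S' = I$ --- so $\mathfrak{q} B$ misses the multiplicative set $S' \setminus \mathfrak{q}$ and $\mathfrak{q}$ is in the image of $\Spec(B) \to \Spec(S')$; thus this map is onto. Second, $\Min(S') = \{\, Q \cap S' : Q \in \Min(B)\,\}$: ``$\supseteq$'' is the first observation preceding the theorem, and ``$\subseteq$'' follows by taking a prime of $B$ over a given minimal prime of $S'$, passing to a minimal prime of $B$ below it, and using that observation again. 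Third, for distinct $Q, Q' \in \Min(B)$ we have $Q \cap S' = Q' \cap S'$ if and only if $\{Q, Q'\} = \{Q_1, Q_2\}$, and for every prime $P$ of $B$ of positive height $(S' \cap P)B = P$: such a $Q$ (with, say, $Q \ne Q_1, Q_2$), and such a $P$, is an ideal contained in neither $Q_1$ nor $Q_2$, so $S'$ contains a generating set for it, giving $(S' \cap Q)B = Q$ and $(S' \cap P)B = P$; and if $Q \cap S' = Q' \cap S'$ with $Q \notin \{Q_1, Q_2\}$ then $Q = (S' \cap Q)B = (S' \cap Q')B \subseteq Q'$, impossible for distinct minimal primes, which forces $\{Q, Q'\} \subseteq \{Q_1, Q_2\}$.

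Now I would define the induced partition of $\Min(S')$ by $C_1' = \{Q_1 \cap S'\} \cup \{\, Q \cap S' : Q \in C_1 \setminus \{Q_1, Q_2\}\,\}$ and $C_i' = \{\, Q \cap S' : Q \in C_i\,\}$ for $2 \leq i \leq m$; the second and third facts show $C_1', \dots, C_m'$ is a genuine partition of $\Min(S')$ into $m$ nonempty blocks and that, for $Q \in \Min(B)$, one has $Q \cap S' \in C_i'$ exactly when $Q \in C_i$. Since $S'$ has $n - 1 < n$ minimal primes, the inductive hypothesis yields a reduced local ring $S \subseteq S'$ with maximal ideal $S \cap M$ satisfying (1)--(5) relative to $C_1', \dots, C_m'$, and I would then verify (1)--(5) for $S \subseteq B$. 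Items (1)--(3) are immediate from $\mathbb{Q} \subseteq S$, transitivity of completion ($\widehat{S} = \widehat{S'} = \widehat{B}$), and the inductive cardinality conclusions. For (4), $Q \cap S = (Q \cap S') \cap S$, so by (4) for $S \subseteq S'$ this equals $Q' \cap S$ iff $Q \cap S'$ and $Q' \cap S'$ share a block $C_i'$, iff $Q$ and $Q'$ share a block $C_i$. For (5), factor $f$ as $\Spec(B) \xrightarrow{g} \Spec(S') \xrightarrow{h} \Spec(S)$ with $g(P) = S' \cap P$ and $h(\mathfrak{q}) = S \cap \mathfrak{q}$; $g$ is onto by the first fact, $h$ is onto by (5) for $S \subseteq S'$, so $f = h \circ g$ is onto; and if $P$ has positive height then $(S' \cap P)B = P$ forces $g(P)$ to have positive height in $S'$ (second observation preceding the theorem), so (5) for $S \subseteq S'$ gives $h(g(P))S' = g(P)$ and hence $f(P)B = (h(g(P))S')B = g(P)B = (S' \cap P)B = P$; the ``in particular'' clause follows exactly as in the two observations preceding the theorem.

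The genuine mathematical content sits in Theorem \ref{gluing} and its supporting lemmas, which are already in hand; the only real work in this proof is organizational --- getting the induced partition $C_1', \dots, C_m'$ right, and checking that two successive gluings compose to produce precisely the coincidences among minimal primes dictated by the original partition (property (4)) while leaving every positive-height prime rigidly preserved (property (5)). That bookkeeping, rather than any hard estimate, is where I expect to have to be careful.
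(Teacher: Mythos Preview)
Your proposal is correct and follows essentially the same inductive strategy as the paper: glue two primes in a common block via Theorem \ref{gluing} to get $S'$, then recurse on $S'$ with one fewer minimal prime and the induced partition. The only notable difference is cosmetic---you establish surjectivity of $\Spec(B)\to\Spec(S')$ directly from the closure property $IB\cap S'=I$, whereas the paper routes it through the identification $\widehat{S'}=\widehat{B}$---and your bookkeeping for the ``second'' and ``third'' facts is a bit more explicit than the paper's corresponding one-line claim that $S'$ has $k-1$ minimal primes.
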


\begin{proof}
Let $|\Min(B)| = k$.  We proceed by induction on $k$.  If $k = 1$, then $S = B$ works.
So let $k > 1$ and assume that the result holds for rings with fewer than $k$ minimal prime ideals.  If $|C_i| = 1$ for every $i = 1,2, \ldots ,m$, then $S = B$ works, so assume that $|C_i| \geq 2$ for some $i$.  Without loss of generality, assume $|C_1| \geq 2$.  Let $Q_1, Q_2$ be distinct elements of $C_1$.  By Theorem \ref{gluing}, there is a quasi-local ring $S' \subseteq B$ with maximal ideal $S' \cap M$ such that the map $S' \longrightarrow B/M^2$ is onto, $IB \cap S' = I$ for every finitely generated ideal $I$ of $S'$, $Q_1 \cap S' = Q_2 \cap S'$, and, if $J$ is an ideal of $B$ with $J \not\subseteq Q_1$ and $J \not\subseteq Q_2$, then $S'$ contains a generating set for $J$.  Since $B$ is reduced and contains the rationals, $S'$ also satisfies these properties.  By Proposition \ref{completionsame}, $S'$ is Noetherian, $\widehat{S'} = \widehat{B}$, $S'/(S' \cap M)$ is uncountable and $|S'| = |S'/(S' \cap M)|$.  Consider the map $f': \Spec(B) \longrightarrow \Spec(S')$ given by $f'(P) = S' \cap P$.  Let $J \in \Spec(S')$.  Then there is a $P \in \Spec(\widehat{S'}) = \Spec(\widehat{B})$ such that $P \cap S' = J$.  Hence, $(J \cap B) \cap S' = P \cap S' = J$, and since $J \cap B \in \Spec{B}$, we have that $f'$ is onto.
If $P$ is a prime ideal of $B$ with $P \not\subseteq Q_1$ and $P \not\subseteq Q_2$, then  $S'$ contains a generating set for $P$ and so $f'(P)B = (S' \cap P)B = P$.  It follows that $S'$ has $k - 1$ minimal prime ideals. 


Consider the partition $C'_1, C'_2, \ldots, C'_m$ on $\Min(S')$ given by 
$C'_i = \{ Q \cap S' \, | \, Q \in C_i\}$.  Note that $|C'_1| = |C_1| - 1$ and $|C'_i| = |C_i|$ for $i = 2,3, \ldots ,m$.
By induction there is a reduced local ring $S \subseteq S' \subseteq B$ with maximal ideal $S \cap M$ such that $S$ contains the rationals, $\widehat{S} = \widehat{S'}= \widehat{B},$ $S/(S \cap M)$ is uncountable, $|S| = |S/(S \cap M)|$, for $Q, Q' \in \Min{S'}$, $Q \cap S = Q' \cap S$ if and only if there is an $i$ with $Q \in C'_i$ and $Q' \in C'_i$, the map $f'':\Spec(S') \longrightarrow \Spec(S)$ given by $f''(P) = S \cap P$ is onto and, if $P$ is a prime ideal of $S'$ with positive height, then $f''(P)S' = P$. 

Note that $f:\Spec(B) \longrightarrow \Spec(S)$ given by $f(P) = S \cap P$ satisfies $f = f'' \circ f'$.  Since $f'$ and $f''$ are onto, so is $f$.  Let $P$ be a prime ideal of $B$ with positive height.  Then $P \not\subseteq Q_1$ and $P \not\subseteq Q_2$ and so $f'(P)B = P$.  Now $f'(P)$ is a prime ideal of $S'$ of positive height, and so $f''(f'(P))S' = f'(P)$.  Therefore, $P = f'(P)B = (f''(f'(P))S')B = f(P)B$.  



Let $Q, Q' \in \Min(B)$.  Then $Q \cap S = Q' \cap S$ if and only if $(Q \cap S') \cap S = (Q' \cap S') \cap S$ if and only if there is an $i \in \{1,2, \ldots ,m\}$ such that $Q \cap S' \in C'_i$ and $Q' \cap S' \in C'_i$ if and only if $Q,Q' \in C_i$.  
\end{proof}



\begin{bibdiv}
\begin{biblist}

\bib{SMALL2009}{article}{
   author={Arnosti, N.},
   author={Karpman, R.},
   author={Leverson, C.},
   author={Levinson, J.},
   author={Loepp, S.},
   title={Semi-local formal fibers of minimal prime ideals of excellent
   reduced local rings},
   journal={J. Commut. Algebra},
   volume={4},
   date={2012},
   number={1},
   pages={29--56},
   issn={1939-0807},
   review={\MR{2913526}},
   doi={10.1216/JCA-2012-4-1-29},
}

\bib{heitmannUFD}{article}{
   author={Heitmann, Raymond C.},
   title={Characterization of completions of unique factorization domains},
   journal={Trans. Amer. Math. Soc.},
   volume={337},
   date={1993},
   number={1},
   pages={379--387},
   issn={0002-9947},
   review={\MR{1102888}},
   doi={10.2307/2154327},
}

\bib{heitmann}{article}{
   author={Heitmann, Raymond C.},
   title={Completions of local rings with an isolated singularity},
   journal={J. Algebra},
   volume={163},
   date={1994},
   number={2},
   pages={538--567},
   issn={0021-8693},
   review={\MR{1262718}},
   doi={10.1006/jabr.1994.1031},
}


\bib{countableexcellent}{article}{
   author={Loepp, S.},
   author={Yu, Teresa},
   title={Completions of countable excellent domains and countable
   noncatenary domains},
   journal={J. Algebra},
   volume={567},
   date={2021},
   pages={210--228},
   issn={0021-8693},
   review={\MR{4158729}},
   doi={10.1016/j.jalgebra.2020.09.021},
}

\end{biblist}
\end{bibdiv}

\end{document}